\newtheorem{theorem}{Theorem}
\newtheorem{definition}[theorem]{Definition}
\newtheorem{lemma}[theorem]{Lemma}
\newtheorem{proposition}[theorem]{Proposition}
\newtheorem{remark}[theorem]{Remark}
\newenvironment{proof}[1][Proof]{\textbf{#1.} }{\ \rule{0.5em}{0.5em}}
\renewcommand\@makefnmark{\relax}
\begin{document}

\title{A local limit theorem for triple connections in subcritical Bernoulli percolation}
\author{M. Campanino\thanks{Partially supported by Italian G.N.A.M.P.A. and the
University of Bologna Funds for selected research topics.}\ , M.
Gianfelice$^{*}$\\Dipartimento di Matematica\\Universit\`{a} degli Studi di Bologna\\P.zza di Porta San Donato, 5 \ I-40127}
\maketitle
\begin{abstract}
We prove a local limit theorem for the probability of a site to be connected
by disjoint paths to three points in subcritical Bernoulli percolation on
$\mathbb{Z}^{d},\,d\geq2$ in the limit where their distances tend to infinity.
\end{abstract}

\footnotetext{\emph{AMS Subject Classification }: 60F15, 60K35, 82B43.
\par
\ \hspace*{0.2cm}\emph{Keywords and phrases}: Percolation, local limit
theorem, decay of connectivities, multidimensional renewal process.}

\section{Introduction and results}

The asymptotic behaviour of the connection function for Bernoulli sub-critical
percolation on $d$-dimensional lattices and of two points correlation
functions of finite range Ising models above critical temperature has been
recently completely proved to agree with that predicted by Ornstein and
Zernike (\cite{CI}, \cite{CIV}, see also \cite{AL}, and \cite{CCC} for some
previous results and \cite{BF} for some results for extreme values of
parameters). The arguments of \cite{CI} and \cite{CIV} follow a general scheme
that is exposed in \cite{CIV1}. A natural question that arises is how higher
order percolation or correlation functions behave for these models. It is
natural to start by addressing this problem in the simplest case, i.e. for
triple connection functions in Bernoulli subcritical percolation on
$d$-dimensional lattices. This analysis is carried out in this work. It turns
out that the techniques developed in \cite{CI} and \cite{CIV}, plus some extra
ideas, allow to obtain the asymptotic behaviour. Interestingly and luckily
enough, some techniques introduced in \cite{CIV} for the Ising models result
useful for our work, though for somewhat different reasons. Besides the
asymptotic behaviour of the probability of triple connections we obtain a
local limit theorem for the positions of points from which three disjoint
paths start and give rise to triple connections. It is worth to observe that
these positions are not decomposed in a natural way as sums of random
variables, as it is most common in local limit theorems.

We consider a Bernoulli bond percolation process on $\mathbb{Z}^{d},\,d\geq2,$
in the subcritical regime ($p<p_{c}\left(  d\right)  $). A basic result,
established by Menshikov \cite{M} and by Aizenman and Barsky \cite{AB} with
different methods, states that, for $p<p_{c}\left(  d\right)  ,$ connection
functions decay exponentially in every direction. Using the FKG inequality it
can be shown that, given a point $n$ of the lattice, the probability
$\mathbb{P}_{p}\left\{  0\leftrightarrow n\right\}  $ that $n$ is connected to
the origin $0$ (i.e. that there exists a chain of open bonds leading from the
origin to $n$) is bounded from above by $e^{-\xi_{p}\left(  n\right)  },$
where
\begin{equation}
\xi_{p}\left(  x\right)  :=-\lim_{N\uparrow\infty}\frac{1}{N}\log
\mathbb{P}_{p}\left\{  0\leftrightarrow\lbrack xN]\right\}  .
\end{equation}
$\xi_{p},$ is always defined and is a finite, convex, homogeneous-of-order-one
function on $\mathbb{R}^{d},$ invariant under permutation and reflection
across coordinate hyperplanes. For $\left|  \left|  x\right|  \right|
=1,\,\xi_{p}$ goes by the name of \emph{inverse connection length} in the
direction $x.$

Let us denote with $\left(  \cdot,\cdot\right)  $ the scalar product in
$\mathbb{R}^{d},$ and with $\left|  \left|  \cdot\right|  \right|
:=\sqrt{\left(  \cdot,\cdot\right)  }$ the associated Euclidean norm. It has
been proved by Hammersley (\cite{G} Theorem 5.1) that if $p<p_{c}\left(
d\right)  $ there exists a strictly positive function $c_{-}\left(  p\right)
$ such that
\begin{equation}
\xi_{p}\left(  x\right)  \geq c_{-}\left(  p\right)  \left|  \left|  x\right|
\right|  \quad x\in\mathbb{R}^{d}\backslash\left\{  0\right\}  ,\label{lbxi}%
\end{equation}
while from Harris inequality it\ follows that
\begin{equation}
\xi_{p}\left(  x\right)  \leq c_{+}\left(  p\right)  \left|  \left|  x\right|
\right|  \quad x\in\mathbb{R}^{d}\backslash\left\{  0\right\}  ,\label{ubxi}%
\end{equation}
which implies that the inverse correlation length is an equivalent norm on
$\mathbb{R}^{d}.$

Following a previous work by \cite{CCC}, where only the axes directions were
considered, recently Campanino and Ioffe showed (\cite{CI} Theorem A) that if
the lattice dimension $d$ is larger than or equal to $2$, uniformly in
$x\in\mathbb{S}^{d-1},$ the correct asymptotics for the \emph{connectivity
function }$\mathbb{P}_{p}\left\{  0\leftrightarrow\left[  Nx\right]  \right\}
$ for $p<p_{c}\left(  d\right)  $ is given by
\begin{equation}
\mathbb{P}_{p}\left\{  0\leftrightarrow\left[  Nx\right]  \right\}
=\frac{\Psi_{p}\left(  x\right)  }{\sqrt{\left(  2\pi N\right)  ^{d-1}}%
}e^{-\xi_{p}\left(  [Nx]\right)  }\left(  1+o\left(  1\right)  \right)
,\label{CIe}%
\end{equation}
where $\Psi_{p}$ is a positive real analytic function on $\mathbb{S}^{d-1}.$

Let
\begin{equation}
\mathbf{U}^{p}:=\left\{  x\in\mathbb{R}^{d}:\xi_{p}\left(  x\right)
\leq1\right\}
\end{equation}
be the unit ball in the $\xi_{p}$-norm ($\xi_{p}$-ball), then any $\xi_{p}
$-ball will be denoted by
\begin{equation}
a\mathbf{U}^{p}:=\left\{  x\in\mathbb{R}^{d}:\xi_{p}\left(  x\right)  \leq
a\right\}  \quad a\in\mathbb{R}^{+}.
\end{equation}
We also introduce the polar body of $\mathbf{U}^{p}$
\begin{equation}
\mathbf{K}^{p}:=\bigcap_{x\in\mathbb{S}^{d-1}}\left\{  t\in\mathbb{R}%
^{d}:\left(  t,x\right)  \leq\xi_{p}\left(  x\right)  \right\}
\end{equation}
Then, given any $x\in\mathbb{R}^{d},$ the set of vectors $t\in\partial
\mathbf{K}^{p}$ meeting the equality
\begin{equation}
\left(  t,x\right)  =\xi_{p}\left(  x\right) \label{defpr}%
\end{equation}
are said to be polar to $x.$ It has been shown (\cite{CI} Lemma 4.3) that both
$\partial\mathbf{U}^{p}$ and $\partial\mathbf{K}^{p}$ are strictly convex
analytic surfaces with gaussian curvature bounded away from zero, so there
exists only one point $t_{x}\in\partial\mathbf{K}^{p}$ satisfying the equality
(\ref{defpr}).

In this paper, using the tools introduced in \cite{CI}, we will analyse the
probability that three distinct points of the lattice are connected through
disjoint open paths, in the limit as their mutual distance tends to infinity.
To this aim we need to introduce some additional notation.

For $x\in\mathbb{R}^{d},$ let us denote by $[x]$ the vector $\left(
[x_{1}],..,[x_{d}]\right)  $ and define
\begin{equation}
X_{3}:=\left\{  \left(  x_{1},x_{2},x_{3}\right)  \in\mathbb{R}^{3d}:x_{i}\neq
x_{j}\,\text{if}\,i\neq j;\,i,j=1,2,3\right\}  .
\end{equation}
Hence, for $\mathbf{x\in}X_{3}$, we define
\begin{equation}
\varphi_{p,\mathbf{x}}\left(  x\right)  :=\sum_{i=1}^{3}\xi_{p}\left(
x-x_{i}\right)  .
\end{equation}
$\varphi_{p,\mathbf{x}}\left(  x\right)  $ is easily seen to be a convex
function whose unique minimum, which is a function of $\mathbf{x},$ we will
denote by $x_{0}\left(  \mathbf{x}\right)  .$ In the following, we will
consider only those elements of $X_{3}$ satisfying the further condition:
\begin{equation}
u_{i}:=\sum_{j\neq i}\nabla\xi_{p}\left(  x_{j}-x_{i}\right)  \notin
\mathbf{K}^{p}\quad\forall i=1,2,3.\label{cond1}%
\end{equation}
The geometrical meaning of (\ref{cond1}) relies on the fact that, given
$\mathbf{x\in}X_{3},$ this condition prevents $x_{0}\left(  \mathbf{x}\right)
$ to coincide with one of the entries of $\mathbf{x}.$ Let then $X_{3}%
^{\prime}$ be the subset of $X_{3}$ whose elements satisfy (\ref{cond1}) and,
given three distinct vertices $n_{1},n_{2},n_{3}$ of the lattice, let:

\begin{itemize}
\item
\begin{equation}
E\left(  n_{1},n_{2},n_{3}\right)  =E\left(  \mathbf{n}\right)
\end{equation}
be the event that $n_{1},n_{2},n_{3}$ are connected by an open cluster;

\item
\begin{equation}
F\left(  k;n_{1},n_{2},n_{3}\right)  =F\left(  k;\mathbf{n}\right)  \quad
k\in\mathbb{Z}^{d}%
\end{equation}
be the event that $k$ is connected by three disjoint self-avoiding open paths
$\gamma_{1},\gamma_{2},\gamma_{3}$ to $n_{1},n_{2},n_{3}$ respectively.
\end{itemize}

Then we have

\begin{theorem}
\label{main}Let $\mathbf{x}\in X_{3}^{\prime},\,y\in\mathbb{R}^{d}$ and let
$N$ vary over the integers. If we denote by $x_{0}\left(  \mathbf{x}\right)  $
the minimizing point of the function $\varphi_{p,\mathbf{x}},$ then, for
$d\geq2$ and $p<p_{c}\left(  d\right)  ,$
\begin{gather}
\mathbb{P}_{p}\left[  F\left(  \left[  x_{0}\left(  \mathbf{x}\right)
N+y\sqrt{N}\right]  ;\left[  N\mathbf{x}\right]  \right)  |E\left(  \left[
N\mathbf{x}\right]  \right)  \right]  =\label{PFE}\\
\Phi_{p}\left(  \mathbf{x}\right)  \frac{\sqrt{\det H_{\varphi}\left(
x_{0}\left(  \mathbf{x}\right)  ,\mathbf{x};p\right)  }}{\left(  2\pi
N\right)  ^{\frac{d}{2}}}\exp\left[  -\frac{\left(  y,H_{\varphi}\left(
x_{0}\left(  \mathbf{x}\right)  ,\mathbf{x};p\right)  y\right)  }{2}\right]
\left(  1+o\left(  1\right)  \right)  ,\nonumber
\end{gather}
where $H_{\varphi}\left(  x_{0}\left(  \mathbf{x}\right)  ,\mathbf{x}%
;p\right)  $ is the Hessian matrix of the function $\varphi_{p,\mathbf{x}}$
evaluated at $x_{0}\left(  \mathbf{x}\right)  ,$ and $\Phi_{p}\left(
\mathbf{x}\right)  $ is an analytic function on $X_{3}^{\prime}.$
\end{theorem}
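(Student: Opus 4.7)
The plan is to mimic the Ornstein--Zernike scheme of \cite{CI} and \cite{CIV}: first obtain a sharp asymptotic for $\mathbb{P}_p[F(k;[N\mathbf{x}])]$ uniformly in $k$ in an $O(\sqrt{N})$-neighbourhood of $x_0(\mathbf{x})N$; then sum over $k$ to recover $\mathbb{P}_p[E([N\mathbf{x}])]$; finally form the ratio and read off the Gaussian in $y$.

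For the sharp asymptotic on $\mathbb{P}_p[F(k;[N\mathbf{x}])]$ the upper bound is immediate from the BK inequality applied to the three disjoint open paths $\gamma_i$, combined with the two-point OZ formula (\ref{CIe}):
\begin{equation*}
\mathbb{P}_p[F(k;[N\mathbf{x}])]\leq\prod_{i=1}^{3}\mathbb{P}_p\{k\leftrightarrow[Nx_i]\}=\frac{\prod_{i}\Psi_p(e_i)}{(2\pi N)^{3(d-1)/2}\prod_i\|x_i-k/N\|^{(d-1)/2}}\exp\!\left(-\sum_{i}\xi_p([Nx_i]-k)\right)(1+o(1)),
\end{equation*}
where $e_i:=([Nx_i]-k)/\|[Nx_i]-k\|$. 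The matching lower bound is the central technical point. The irreducible decomposition of \cite{CI} shows that, for each $i$, a typical open path realizing $\{k\leftrightarrow[Nx_i]\}$ lives in an arbitrarily thin cone around the polar direction $t_{x_i-k/N}\in\partial\mathbf{K}^p$. At $k=x_0(\mathbf{x})N$ the stationarity equation $\sum_i\nabla\xi_p(x_0-x_i)=0$ forces the three polar vectors to sum to zero, and strict convexity of $\partial\mathbf{K}^p$ (\cite{CI} Lemma 4.3) forbids two of them from coinciding; condition (\ref{cond1}) propagates this separation to the whole of $X_3'$. Hence the three cones may be chosen mutually disjoint outside a bounded neighbourhood of $k$, disjointness of the paths can be enforced at no exponential cost, and a matching lower bound follows from the surgery and decoupling arguments of \cite{CIV}.

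To pass from $F(k;\cdot)$ to $E$, introduce a canonical branching point $k^{*}$ on the event $E$ (e.g.\ the lexicographically smallest vertex from which three disjoint open self-avoiding paths to $[N\mathbf{x}]$ depart). This partitions $E$ into the disjoint union of the events $\{k^{*}=k\}\cap F(k;[N\mathbf{x}])$. For $k$ far from $x_0(\mathbf{x})N$ the exponential factor $e^{-N\varphi_{p,\mathbf{x}}(k/N)}$ makes the contribution negligible, while in the $\sqrt{N}$-window the canonical-point restriction coincides with $F(k;\cdot)$ up to relative errors of order $o(1)$. Expanding
\begin{equation*}
\sum_i\xi_p([Nx_i]-k)=N\varphi_{p,\mathbf{x}}(x_0(\mathbf{x}))+\tfrac{1}{2}(z,H_\varphi(x_0(\mathbf{x}),\mathbf{x};p)z)+o(1),\qquad z:=(k-x_0(\mathbf{x})N)/\sqrt{N},
\end{equation*}
and applying Laplace's method gives $\mathbb{P}_p[E([N\mathbf{x}])]\sim C_p(\mathbf{x})\,N^{-(2d-3)/2}e^{-N\varphi_{p,\mathbf{x}}(x_0(\mathbf{x}))}$, with $C_p(\mathbf{x})$ explicit in $\prod_i\Psi_p$ and $(\det H_\varphi)^{-1/2}$.

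Dividing the sharp asymptotic for $\mathbb{P}_p[F(k;[N\mathbf{x}])]$ at $k=[x_0(\mathbf{x})N+y\sqrt{N}]$ by the above expression for $\mathbb{P}_p[E([N\mathbf{x}])]$ yields the stated Gaussian profile in $y$ with covariance $H_\varphi^{-1}$ and the $N^{-d/2}$ normalization of (\ref{PFE}); the prefactor $\Phi_p(\mathbf{x})$ is the resulting ratio of constants. Its analyticity on $X_3'$ follows from the analyticity of $\Psi_p$ on $\mathbb{S}^{d-1}$ (\cite{CI}) combined with analyticity of $x_0(\mathbf{x})$, obtained from the implicit function theorem applied to $\sum_i\nabla\xi_p(x_0-x_i)=0$ together with the positive definiteness of $H_\varphi$ (itself inherited from the strictly positive Gaussian curvature of $\partial\mathbf{U}^p$). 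The principal obstacle is the lower bound on $\mathbb{P}_p[F(k;[N\mathbf{x}])]$: enforcing the disjointness of the three paths with \emph{no} loss of a multiplicative factor is exactly where (\ref{cond1}) plays its essential geometric role, and it is here that the decoupling machinery developed in \cite{CIV} for Ising correlations enters the percolation setting through its geometric, rather than high-temperature, content.
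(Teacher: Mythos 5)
Your overall architecture (sharp asymptotics for the numerator, Laplace summation for the denominator, then the ratio) parallels the paper, but two of your key steps are not just unproved -- they are false as stated. First, the constant in your asymptotics for $\mathbb{P}_p[F(k;[N\mathbf{x}])]$ is wrong: the BK bound by the product of three two-point functions is only an upper bound, and the ratio of $\mathbb{P}_p[F(k;[N\mathbf{x}])]$ to that product tends to a nontrivial constant, not to $1$, because the three arms must be disjoint near $k$ and this local interaction survives in the prefactor. The paper obtains the correct constant not by ``surgery at no cost'' but by an exact factorization: each arm is cut at its last $\left(\eta,K,t_i\right)$-break point $b_i$, the event decomposes as in (\ref{decF}), and each piece factors exactly as $g_{p}^{\eta,K}\left(k;\mathbf{b}\right)\prod_{i}\tilde{h}_{t_{i}}^{\eta,K}\left([Nx_{i}]-b_{i}\right)$, so the prefactor in (\ref{lbPF}) involves the vertex generating function $\sum_{\mathbf{a}}g_{p}^{\eta,K}\left(0;\mathbf{a}\right)e^{\sum_{i}\left(t_{i},a_{i}\right)}$ and the modified one-arm amplitudes $\tilde{\Lambda}_{p}$ from (\ref{ashtt}), not $\prod_i\Psi_p$.

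Second, and more seriously, your passage from $F$ to $E$ via a canonical branching point $k^{*}$ with the claim that $\left\{k^{*}=k\right\}$ coincides with $F\left(k;[N\mathbf{x}]\right)$ up to relative error $o\left(1\right)$ is incorrect. The set of vertices admitting three disjoint open paths to the $[Nx_{i}]$ is generically not a single point: a loop of bounded size at the junction produces several such vertices with conditional probability bounded away from zero, uniformly in $N$. Hence $\sum_{k}\mathbb{P}_{p}[F\left(k;[N\mathbf{x}]\right)]$ overcounts $\mathbb{P}_{p}[E\left([N\mathbf{x}]\right)]$ by a constant factor strictly larger than one, not by $1+o\left(1\right)$; indeed, integrating (\ref{PFE}) over $y$ shows the total conditional mass is $\Phi_{p}\left(\mathbf{x}\right)$, which is precisely the ratio of two \emph{different} vertex generating functions and is not $1$ in general, so your claim would contradict the very form of the result. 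The paper flags exactly this obstruction (one cannot sum the asymptotics of $F\left(k;\cdot\right)$ over $k$) and resolves it by decomposing $E\left([N\mathbf{x}]\right)$ over the uniquely determined break-point configuration $\mathbf{b}$ -- the events $T\left(\mathbf{b};[N\mathbf{x}]\right)$ are disjoint in $\mathbf{b}$ -- with the vertex weight $g_{p}^{\eta,K}\left(\mathbf{b}\right)$ defined as the probability of the union over $k$. If you wish to keep the canonical-point route you would have to prove that $\mathbb{P}_{p}\left[k^{*}=k\right]=c\left(\mathbf{x}\right)\mathbb{P}_{p}\left[F\left(k;[N\mathbf{x}]\right)\right]\left(1+o\left(1\right)\right)$ uniformly in the $\sqrt{N}$-window with $c\left(\mathbf{x}\right)$ analytic, which requires essentially the same renewal/decoupling analysis you are trying to avoid. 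You would also need the \emph{a priori} localization of the junction within distance $N^{\frac12+\varepsilon}$ of $Nx_{0}\left(\mathbf{x}\right)$ (the paper's Proposition \ref{p1}, via BK and the strict convexity supplied by condition (\ref{cond1})), which your sketch uses implicitly but never establishes.
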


\begin{remark}
\label{rem1}For any $\varepsilon\in\left(  0,\frac{1}{2}\right)  $ and any
$\beta\in\left(  0,\frac{1}{2}\right)  ,$ let
\begin{multline}
F_{\varepsilon,\beta}\left(  k_{1},k_{2};[N\mathbf{x}]\right)  :=F\left(
k_{1};[Nx]\right)  \cap F\left(  k_{2};[Nx]\right)  \cap\left\{  k_{1}%
,k_{2}\in N^{\frac{1}{2}+\varepsilon}\mathbf{U}^{p}\left(  Nx_{0}\left(
\mathbf{x}\right)  \right)  \cap\mathbb{Z}^{d}:\right. \nonumber\\
\left.  \left|  \left|  k_{1}-k_{2}\right|  \right|  >N^{\beta}\right\}
\end{multline}
be the event that two lattice's points $k_{1}$ and $k_{2},$ belonging to a
$\xi_{p}$-neighborhood of $[Nx_{0}\left(  \mathbf{x}\right)  ]$ of radius
$N^{\frac{1}{2}+\varepsilon}$ and whose mutual distance is larger than
$N^{\beta},\,$are connected to $[Nx_{1}],[Nx_{2}],[Nx_{3}]$ by three disjoint
self-avoiding open paths. As a byproduct, in the proof of Theorem \ref{main}
we also get that there exists a positive constant $c^{\prime\prime}$ such
that,
\begin{equation}
\mathbb{P}_{p}\left[  F_{\varepsilon,\beta}\left(  k_{1},k_{2};[N\mathbf{x}%
]\right)  \right]  \leq e^{-N\varphi_{p,\mathbf{x}}\left(  x_{0}\left(
\mathbf{x}\right)  \right)  -c^{\prime\prime}N^{\beta\wedge2\varepsilon}}.
\end{equation}
\end{remark}

\section{Local limit theorem}

\subsection{Preliminary results}

Let us define
\begin{equation}
\mathcal{H}_{y}^{t}:=\left\{  x\in\mathbb{R}^{d}:\left(  t,x\right)  =\left(
t,y\right)  \right\}  \quad y\in\mathbb{R}^{d}%
\end{equation}
to be the $\left(  d-1\right)  $-dimensional hyperplane in $\mathbb{R}^{d}$
orthogonal to the vector $t$ passing through a point $y\in\mathbb{R}^{d}$ and
the corresponding halfspaces
\begin{align}
\mathcal{H}_{y}^{t,-}  &  :=\left\{  x\in\mathbb{R}^{d}:\left(  t,x\right)
\leq\left(  t,y\right)  \right\}  ,\\
\mathcal{H}_{y}^{t,+}  &  :=\left\{  x\in\mathbb{R}^{d}:\left(  t,x\right)
\geq\left(  t,y\right)  \right\}  .
\end{align}
Then we have

\begin{lemma}
\label{lbqf}For any $\mathbf{x\in}X_{3}^{\prime},\,\varphi_{p,\mathbf{x}}$ is
a strictly convex function on a neighborhood of $x_{0}\left(  \mathbf{x}%
\right)  ,$ where it is lower bounded by a strictly positive quadratic form of
$x-x_{0}\left(  \mathbf{x}\right)  .$
\end{lemma}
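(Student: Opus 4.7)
The plan is to prove the Hessian $H_\varphi(x_0(\mathbf{x}))$ of $\varphi_{p,\mathbf{x}}$ at its minimizer is strictly positive definite; strict convexity on a neighborhood and the quadratic lower bound then follow immediately from a second-order Taylor expansion.

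First I would observe that condition (\ref{cond1}), which defines $X_3^{\prime}$, forces $x_0(\mathbf{x}) \neq x_i$ for $i=1,2,3$, and that by Lemma 4.3 of \cite{CI} the gauge $\xi_p$ is real-analytic on $\mathbb{R}^d\setminus\{0\}$; hence each summand $\xi_p(\,\cdot\,-x_i)$, and therefore $\varphi_{p,\mathbf{x}}$, is analytic in a neighborhood of $x_0(\mathbf{x})$. Next I would record two facts about $H_{\xi_p}(y)$ for $y\neq 0$: (i) convexity of $\xi_p$ gives $H_{\xi_p}(y)\succeq 0$; (ii) differentiating the Euler identity $(y,\nabla\xi_p(y))=\xi_p(y)$ yields $H_{\xi_p}(y)\,y=0$, while strict convexity of $\partial\mathbf{U}^p$ with Gaussian curvature bounded away from zero forces positive definiteness of $H_{\xi_p}(y)$ on the $(d-1)$-dimensional subspace transverse to $y$, so $\ker H_{\xi_p}(y)=\mathrm{span}(y)$ exactly.

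The core step is to show that $H_\varphi(x_0(\mathbf{x}))=\sum_{i=1}^{3}H_{\xi_p}(x_0(\mathbf{x})-x_i)$ is strictly positive definite. Suppose, for contradiction, some $v\neq 0$ satisfies $\bigl(v,H_\varphi(x_0(\mathbf{x}))\,v\bigr)=0$. As every summand is positive semi-definite, $v$ lies in each kernel, hence $v\parallel(x_0(\mathbf{x})-x_i)$ for $i=1,2,3$. Writing $x_0(\mathbf{x})-x_i=c_i\,\hat{e}$ with $\hat{e}:=v/\|v\|$ and $c_i\neq 0$, the four points $x_0(\mathbf{x}),x_1,x_2,x_3$ must be collinear along $\hat{e}$. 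I would then invoke the first-order optimality $\sum_{i}\nabla\xi_p(c_i\hat{e})=0$ together with the central symmetry $\xi_p(-y)=\xi_p(y)$ (which makes $\nabla\xi_p$ odd) and the degree-zero homogeneity of $\nabla\xi_p$ to reduce this to $\bigl(\sum_i\mathrm{sign}(c_i)\bigr)\nabla\xi_p(\hat{e})=0$. But $\nabla\xi_p(\hat{e})\neq 0$ (otherwise $\xi_p$ would vanish on $\mathbb{R}\hat{e}$, contradicting (\ref{lbxi})), and $\sum_i\mathrm{sign}(c_i)$ is an odd integer; contradiction.

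Finally, positive definiteness of $H_\varphi(x_0(\mathbf{x}))$ together with continuity of $H_\varphi$ near $x_0(\mathbf{x})$ yields, via Taylor,
\[
\varphi_{p,\mathbf{x}}(x)-\varphi_{p,\mathbf{x}}(x_0(\mathbf{x}))\;\geq\;\tfrac{1}{4}\bigl(x-x_0(\mathbf{x}),\,H_\varphi(x_0(\mathbf{x}))\,(x-x_0(\mathbf{x}))\bigr)
\]
on a sufficiently small neighborhood of $x_0(\mathbf{x})$, giving both the strict convexity and the positive quadratic lower bound demanded by the statement. The main obstacle is the kernel/collinearity step: two of the rank-deficient Hessians can genuinely share a null direction, and one really needs both the odd parity of $\nabla\xi_p$ (coming from the central symmetry of the model) and the three-fold optimality condition to rule out the triple degeneracy.
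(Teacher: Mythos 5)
Your argument is correct, but it follows a genuinely different route from the paper's. The paper never touches the Hessian of $\xi_{p}$ in this lemma: it writes $y=x-x_{0}\left(\mathbf{x}\right)$, invokes the tangent-plane quadratic lower bound of \cite{CI} Lemma 4.4 (curvature of $\partial\mathbf{U}^{p}$) for each summand $\xi_{p}\left(y-y_{i}\right)$, sums, kills the linear terms with the minimality condition (\ref{condmin}), and obtains the explicit bound (\ref{strconv}), $\varphi_{p,\mathbf{y}}\left(y\right)-\varphi_{p,\mathbf{y}}\left(0\right)\geq c\sum_{i}\left|\left|P_{i}^{\bot}y\right|\right|^{2}$; positivity of this form is then reduced to the statement that the three dual vectors $t_{i}=\nabla\xi_{p}\left(y_{i}\right)$ cannot all be parallel, which the paper attributes tersely to (\ref{cond1}) and (\ref{condmin}). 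You instead prove outright that $H_{\varphi}\left(x_{0}\left(\mathbf{x}\right)\right)=\sum_{i}H_{\xi_{p}}\left(x_{0}\left(\mathbf{x}\right)-x_{i}\right)$ is positive definite: the identification $\ker H_{\xi_{p}}\left(y\right)=\mathrm{span}\left(y\right)$ (Euler identity plus curvature bounded away from zero), the resulting collinearity of $x_{0},x_{1},x_{2},x_{3}$ in a putative null direction, and the parity contradiction $\sum_{i}\mathrm{sign}\left(c_{i}\right)\nabla\xi_{p}\left(\hat{e}\right)=0$ with an odd integer coefficient, using oddness and degree-zero homogeneity of $\nabla\xi_{p}$; Taylor expansion then gives strict convexity and the quadratic lower bound. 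The degeneracies excluded by the two arguments are in fact equivalent (for a smooth strictly convex norm, parallel gradients on $\partial\mathbf{K}^{p}$ correspond to parallel arguments), and your parity argument is essentially the explicit version of what the paper leaves implicit. What your route costs is $C^{2}$ (indeed analytic) regularity of $\xi_{p}$ away from the origin and the strict positivity of the curvature, both supplied by \cite{CI} Lemma 4.3 and used freely elsewhere in the paper (e.g.\ the homogeneity of $H_{\xi}\left(\cdot;p\right)$ in Proposition \ref{p1}), so nothing is lost; what it buys is the sharper and more canonical constant, namely the bound by $\frac{1}{4}H_{\varphi}\left(x_{0}\left(\mathbf{x}\right)\right)$ itself, which is the form actually quoted later in the proof of Theorem \ref{main}, whereas the paper's version needs the extra (easy) remark that $\sum_{i}\left|\left|P_{i}^{\bot}y\right|\right|^{2}$ dominates a multiple of $\left|\left|y\right|\right|^{2}$.
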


\begin{proof}
Setting $y=x-x_{0}\left(  \mathbf{x}\right)  $ we consider $\varphi
_{p,\mathbf{y}}\left(  y\right)  =\sum_{i=1}^{3}\xi_{p}\left(  y-y_{i}\right)
,$ where\linebreak \ $y_{i}=x_{i}-x_{0}\left(  \mathbf{x}\right)  .$ Let
$t_{i}\in\partial\mathbf{K}^{p}$ be the polar point to $y_{i}.$ By the
convexity of $\xi_{p}$ and \cite{CI} Lemma 4.4, there exist positive constants
$c^{\prime},c$ such that, for any $z\in\mathbb{R}^{d}$ satisfying, $\left(
z,t_{i}\right)  =\left(  y_{i},t_{i}\right)  =\xi_{p}\left(  y_{i}\right)  $
and $\left|  \left|  z-y_{i}\right|  \right|  \leq c^{\prime},$
\begin{equation}
\xi_{p}\left(  z\right)  \geq\left(  t_{i},z\right)  +c\left|  \left|
z-y_{i}\right|  \right|  ^{2}.
\end{equation}
Hence, for any $y\in\mathbb{R}^{d}$ such that $\left|  \left|  y\right|
\right|  \leq c^{\prime},$ setting $z=y_{i}-y,$ we get
\begin{equation}
\xi_{p}\left(  y-y_{i}\right)  -\xi_{p}\left(  y_{i}\right)  \geq-\left(
\nabla\xi_{p}\left(  y_{i}\right)  ,y\right)  +c\left|  \left|  P_{i}^{\bot
}y\right|  \right|  ^{2}\quad i=1,2,3,
\end{equation}
where, $\forall i=1,2,3,\,P_{i}^{\bot}$ is the orthogonal projector on
$\mathcal{H}_{0}^{t_{i}}$. Summing up, since by the definition of
$x_{0}\left(  \mathbf{x}\right)  ,$\thinspace%
\begin{equation}
\sum_{i=1}^{3}\nabla\xi_{p}\left(  y_{i}\right)  =\sum_{i=1}^{3}\nabla\xi
_{p}\left(  x_{0}\left(  \mathbf{x}\right)  -x_{i}\right)  =0,\label{condmin}%
\end{equation}
we get
\begin{equation}
\varphi_{p,\mathbf{y}}\left(  y\right)  -\varphi_{p,\mathbf{y}}\left(
0\right)  \geq c\sum_{i=1}^{3}\left|  \left|  P_{i}^{\bot}y\right|  \right|
^{2}.\label{strconv}%
\end{equation}
The right hand side of the last expression can never be zero for $y\neq0$
because, $\forall i=1,2,3,$ the hyperplanes $\mathcal{H}_{0}^{t_{i}}$ have
codimension one and conditions (\ref{cond1}) and (\ref{condmin}) prevent the
vectors $\nabla\xi_{p}\left(  x_{0}\left(  \mathbf{x}\right)  -x_{i}\right)
,\,i=1,2,3,$ from being parallel.
\end{proof}

For$\,l\geq1,$ let $\mathbf{C}_{\{k_{1},..,k_{l}\}}$ denote the common open
cluster of the points $k_{1},..,k_{l}\in\mathbb{Z}^{d},$ provided it exists,
and let $t\in\partial\mathbf{K}^{p}.$ Given two points $k_{i},k_{j} $ such
that $\left(  k_{i},t\right)  \leq\left(  k_{j},t\right)  $, we denote by
$\mathbf{C}_{\{k_{i},k_{j}\}}^{t}$ the cluster of $k_{i}$ and $k_{j}$ inside
the strip $\allowbreak\mathcal{S}_{\left\{  k_{i},k_{j}\right\}  }%
^{t}:=\mathcal{H}_{k_{i}}^{t,+}\cap\mathcal{H}_{k_{j}}^{t,-}.$

First we estimate the probability that, $\forall i=1,2,3,$ the points
$[Nx_{i}]$ are connected through three disjoint open paths to a point whose
distance from $x_{0}\left(  [N\mathbf{x}]\right)  $ is larger than $N^{\alpha}
$ with $\alpha\in\left(  \frac{1}{2},1\right)  .$

For any $\mathbf{x}\in X_{3}^{\prime},$ let $\mathbf{C}_{[N\mathbf{x}%
]}=\mathbf{C}_{\left\{  [Nx_{1}],[Nx_{2}],[Nx_{3}]\right\}  }$ and
\begin{equation}
A_{\alpha,N}\left(  \mathbf{x}\right)  :=\left\{  \exists n\in\mathbf{C}%
_{[N\mathbf{x}]}:n\overset{\gamma_{i}}{\longleftrightarrow}[Nx_{i}%
],\,\gamma_{i}\cap\gamma_{j}=n,\,i,j=1,2,3,\,i\neq j;\,\left|  \left|
n-x_{0}\left(  [N\mathbf{x}]\right)  \right|  \right|  \geq N^{\alpha}\right\}
\label{defA}%
\end{equation}
be the event that the lattice points $[Nx_{i}]$ are connected through three
disjoint open paths to a point $n$ whose distance from $x_{0}\left(
[N\mathbf{x}]\right)  $ is larger than or equal to $N^{\alpha}.$ We have

\begin{proposition}
\label{p1}For any $\mathbf{x}\in X_{3}^{\prime}$ and $\alpha>\frac{1}{2},$
\begin{equation}
\mathbb{P}_{p}[A_{\alpha,N}\left(  \mathbf{x}\right)  ]\leq e^{-\varphi
_{p,[N\mathbf{x]}}\left(  x_{0}\left(  [N\mathbf{x}]\right)  \right)
}e^{-c_{1}N^{2\alpha-1}},\label{3BK}%
\end{equation}
with $c_{1}$ a positive constant.
\end{proposition}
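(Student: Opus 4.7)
The plan is to bound $\mathbb{P}_p[A_{\alpha,N}(\mathbf{x})]$ by a union bound over the branching point $n$, use the van den Berg--Kesten (BK) inequality to factorize the three-disjoint-paths event into a product of three two-point connections, and then appeal to Lemma \ref{lbqf} to show that the resulting sum is concentrated around the minimizer $x_0([N\mathbf{x}])$. For each fixed $n\in\mathbb{Z}^d$, the condition $\gamma_i\cap\gamma_j=n$ means the three paths share only the vertex $n$, so they have disjoint edge sets. Hence BK together with Hammersley's bound $\mathbb{P}_p[u\leftrightarrow v]\leq e^{-\xi_p(v-u)}$ gives
\[
\mathbb{P}_p\!\left[\exists\,\gamma_1,\gamma_2,\gamma_3\text{ from }n\text{ to }[Nx_i]\right]\leq\prod_{i=1}^{3}\mathbb{P}_p[n\leftrightarrow [Nx_i]]\leq e^{-\varphi_{p,[N\mathbf{x}]}(n)},
\]
so that
\[
\mathbb{P}_p[A_{\alpha,N}(\mathbf{x})]\leq\sum_{n:\,||n-x_0([N\mathbf{x}])||\geq N^\alpha}e^{-\varphi_{p,[N\mathbf{x}]}(n)}.
\]

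Next, I would exploit the $1$-homogeneity of $\xi_p$ and the bound $|\xi_p(v+\epsilon)-\xi_p(v)|\leq c_+\sqrt{d}$ for the discretization error $\epsilon=Nx_i-[Nx_i]$ to write $\varphi_{p,[N\mathbf{x}]}(Nz)=N\varphi_{p,\mathbf{x}}(z)+O(1)$, and similarly $x_0([N\mathbf{x}])=Nx_0(\mathbf{x})+O(1)$. Lemma \ref{lbqf} then transfers, for a small $\delta>0$ and some $c>0$, into the scale-$N$ quadratic estimate
\[
\varphi_{p,[N\mathbf{x}]}(n)-\varphi_{p,[N\mathbf{x}]}(x_0([N\mathbf{x}]))\geq\frac{c}{N}\,\|n-x_0([N\mathbf{x}])\|^2\qquad\text{for }\|n-x_0([N\mathbf{x}])\|\leq\delta N.
\]
For $\|n-x_0([N\mathbf{x}])\|>\delta N$, strict convexity of $\varphi_{p,\mathbf{x}}$ together with the norm equivalence (\ref{lbxi}) gives linear-in-$\|n-x_0([N\mathbf{x}])\|$ growth, much stronger than needed.

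Splitting the sum at $\|m\|=\delta N$ with $m:=n-x_0([N\mathbf{x}])$, the near region contributes at most
\[
e^{-\varphi_{p,[N\mathbf{x}]}(x_0([N\mathbf{x}]))}\sum_{\|m\|\geq N^\alpha}e^{-c\|m\|^2/N}\leq C\,N^{d/2}\,e^{-\varphi_{p,[N\mathbf{x}]}(x_0([N\mathbf{x}]))}\,e^{-c N^{2\alpha-1}},
\]
while the far region contributes an exponentially smaller amount thanks to the linear growth. Since $2\alpha-1>0$, the polynomial prefactor $N^{d/2}$ is absorbed by slightly shrinking the constant, yielding (\ref{3BK}). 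The main technical delicacy is the scale-matching: transferring the unit-scale quadratic bound of Lemma \ref{lbqf}, valid only in a fixed neighborhood of $x_0(\mathbf{x})$, to the scale-$N$ bound around $x_0([N\mathbf{x}])$ while controlling the $O(1)$ discretization errors from the integer parts and verifying that strict convexity plus (\ref{lbxi}) yields matching linear growth outside the neighborhood so that the global decomposition closes uniformly in $N$.
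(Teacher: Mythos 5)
Your argument is correct and takes essentially the same route as the paper: a union bound over the branching point $n$, the BK inequality together with $\mathbb{P}_p[u\leftrightarrow v]\leq e^{-\xi_p(v-u)}$ to extract the factor $e^{-\varphi_{p,[N\mathbf{x}]}(n)}$, and then the quadratic lower bound of Lemma \ref{lbqf}, transferred to scale $N$ via the homogeneity of $\xi_p$ (Hessian of order $1/N$), to show the sum over $\|n-x_0([N\mathbf{x}])\|\geq N^{\alpha}$ is $e^{-\varphi_{p,[N\mathbf{x}]}(x_0([N\mathbf{x}]))}$ times a polynomial factor times $e^{-cN^{2\alpha-1}}$. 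The only cosmetic difference is bookkeeping: the paper radially linearizes the bound by convexity (slope $c_2N^{\alpha-1}$ beyond radius $N^{\alpha}$) and sums an exponential tail, while you sum the Gaussian-type bound directly in the near region and treat $\|n-x_0([N\mathbf{x}])\|>\delta N$ separately; both yield (\ref{3BK}) after absorbing the polynomial prefactor.
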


\begin{proof}
By the BK inequality (see e.g. \cite{G})
\begin{equation}
\mathbb{P}_{p}[A_{\alpha,N}\left(  \mathbf{x}\right)  ]\leq e^{-\varphi
_{p,\left[  N\mathbf{x}\right]  }\left(  x_{0}\left(  \left[  N\mathbf{x}%
\right]  \right)  \right)  }\sum_{n\in\mathbb{Z}^{d}\,:\,\left|  \left|
n-x_{0}\left(  [N\mathbf{x}]\right)  \right|  \right|  \geq N^{\alpha}%
}e^{-[\varphi_{p,\left[  N\mathbf{x}\right]  }\left(  n\right)  -\varphi
_{p,[N\mathbf{x}]}\left(  x_{0}\left(  \left[  N\mathbf{x}\right]  \right)
\right)  ]}.\label{BK1}%
\end{equation}
The convexity of $\varphi_{p,\left[  N\mathbf{x}\right]  }$ implies that given
$z\in\mathbb{R}^{d},$ for any point $y$ lying on the segment between $z$ and
$x_{0}\left(  \left[  N\mathbf{x}\right]  \right)  ,$ we have
\begin{equation}
\varphi_{p,\left[  N\mathbf{x}\right]  }\left(  z\right)  -\varphi_{p,\left[
N\mathbf{x}\right]  }\left(  x_{0}\left(  \left[  N\mathbf{x}\right]  \right)
\right)  \geq\frac{\left|  \left|  z-x_{0}\left(  \left[  N\mathbf{x}\right]
\right)  \right|  \right|  }{\left|  \left|  y-x_{0}\left(  \left[
N\mathbf{x}\right]  \right)  \right|  \right|  }\left(  \varphi_{p,\left[
N\mathbf{x}\right]  }\left(  y\right)  -\varphi_{p,\left[  N\mathbf{x}\right]
}\left(  x_{0}\left(  \left[  N\mathbf{x}\right]  \right)  \right)  \right)
.\label{conv1}%
\end{equation}

Since $\xi_{p}$ is a homogeneous function of order one, its Hessian matrix
$H_{\xi}\left(  \cdot;p\right)  $ is a homogeneous function of order $-1.$
Hence, choosing $y$ such that $\left|  \left|  y-x_{0}\left(  \left[
N\mathbf{x}\right]  \right)  \right|  \right|  =N^{\alpha},\,\forall
i=1,2,3,\,\left|  \left|  y-[Nx_{i}]\right|  \right|  \geq N^{\alpha}$ and by
(\ref{strconv}) there exists a positive constant $c_{2}$ such that
\begin{equation}
\varphi_{p,\left[  N\mathbf{x}\right]  }\left(  y\right)  -\varphi_{p,\left[
N\mathbf{x}\right]  }\left(  x_{0}\left(  \left[  N\mathbf{x}\right]  \right)
\right)  \geq c_{2}N^{2\alpha-1}.\label{conv2}%
\end{equation}
Furthermore, for any $z$ outside of a neigbourhood of $[Nx_{i}],\,i=1,2,3,$%
\begin{equation}
\left(  H_{\varphi}\left(  z,\frac{[N\mathbf{x}]}{N};p\right)  \right)
_{i,j}=\left(  H_{\varphi}\left(  z,\mathbf{x};p\right)  \right)
_{i,j}+O\left(  \frac{1}{N}\right)  ,\quad i,j=1,..,d\,.\label{stimHfi}%
\end{equation}
Substituting (\ref{conv1}) and (\ref{conv2}) into (\ref{BK1}), for values of
$N$ large enough, we can bound the r.h.s. of (\ref{BK1}) by
\begin{gather}
\sum_{n\in\mathbb{Z}^{d}\,:\,\left|  \left|  n-x_{0}\left(  \left[
N\mathbf{x}\right]  \right)  \right|  \right|  \geq N^{\alpha}}e^{-c_{2}%
N^{\alpha-1}\left|  \left|  n-x_{0}\left(  \left[  N\mathbf{x}\right]
\right)  \right|  \right|  }\leq c_{3}\int_{\left\{  x\in\mathbb{R}%
^{d}\,:\,\left|  \left|  x\right|  \right|  \geq N^{\alpha}\right\}
}dxe^{-c_{2}N^{\alpha-1}\left|  \left|  x\right|  \right|  }\\
\leq\frac{c_{4}}{N^{\left(  \alpha-1\right)  d}}\int_{c_{2}N^{2\alpha-1}%
}^{\infty}r^{\left(  d-1\right)  }e^{-r}dr\leq c_{4}N^{d\left(  1-\alpha
\right)  }e^{-\frac{c_{2}}{2}N^{2\alpha-1}}\,.\nonumber
\end{gather}
\end{proof}

\subsection{Renewal structure of connectivities}

Given $t\in\partial\mathbf{K}^{p}$ and a positive number $\eta<1$, we define
the set (\emph{surcharge} cone)
\begin{equation}
\mathcal{C}_{\eta}\left(  t\right)  :=\left\{  x\in\mathbb{R}^{d}:\left(
t,x\right)  \geq\left(  1-\eta\right)  \xi_{p}\left(  x\right)  \right\}  .
\end{equation}

We now follow \cite{CI} and \cite{CIV}. Let $e$ be the first of the unit
vectors $e_{1},..,e_{d}$ in the direction of the coordinate axis such that
$\left(  t,e\right)  $ is maximal and let $x_{t}$ denote the element of
$\partial\mathbf{U}^{p}$ polar to $t.$

\begin{definition}
$k,n\in\mathbb{Z}^{d}$ are called $h_{t}$-connected if

\begin{itemize}
\item [1 -]$n$ and $k$ are connected in $\mathcal{S}_{\{k,n\}}^{t};$

\item[2 -]
\begin{equation}
\mathbf{C}_{\{k,n\}}^{t}\cap\mathcal{S}_{\left\{  k,k+e\right\}  }%
^{t}=\left\{  k,k+e\right\}  ,\quad\mathbf{C}_{\{k,n\}}^{t}\cap\mathcal{S}%
_{\left\{  n-e,n\right\}  }^{t}=\left\{  n-e,n\right\}  .
\end{equation}
\end{itemize}
\end{definition}

Moreover, denoting by $\left\{  k\overset{h_{t}}{\longleftrightarrow
n}\right\}  $ the event that $n$ and $k$ are $h_{t}$-connected, we set
\begin{equation}
h_{t}\left(  k,n\right)  :=\mathbb{P}_{p}\left\{  k\overset{h_{t}%
}{\longleftrightarrow n}\right\}  .
\end{equation}

Notice that, by translation invariance, $h_{t}\left(  k,n\right)
=h_{t}\left(  n-k,0\right)  $ so in the sequel we will denote it simply by
$h_{t}\left(  n-k\right)  .$ We also define by convention $h_{t}\left(
0\right)  =1.$

\begin{definition}
Let $k,n\in\mathbb{Z}^{d}$ be connected. The points $b\in\mathbf{C}%
_{\{k,n\}}^{t}$ such that:

\begin{itemize}
\item [1 -]$\left(  t,k+e\right)  \leq\left(  t,b\right)  \leq\left(
t,n-e\right)  ;$

\item[2 -] $\mathbf{C}_{\{k,n\}}^{t}\cap\mathcal{S}_{\left\{  b-e,b+e\right\}
}^{t}=\left\{  b-e,b,b+e\right\}  ;$
\end{itemize}

are said to be $t$\emph{-break points} of $\mathbf{C}_{\{k,n\}}.$ The
collection of such points, which we remark is a totally ordered set with
respect to the scalar product with $t,$ will be denoted by $\mathbf{B}%
^{t}\left(  k,n\right)  .$
\end{definition}

\begin{definition}
Let $k,n\in\mathbb{Z}^{d}$ be $h_{t}$-connected. If $\mathbf{B}^{t}\left(
k,n\right)  $ is empty, then $n$ and $k$ are said to be $f_{t}$-connected and
the corresponding event is denoted by $\left\{  k\overset{f_{t}}%
{\longleftrightarrow n}\right\}  .$ We then set\linebreak \ $f_{t}\left(
n-k\right)  :=\mathbb{P}_{p}\left\{  k\overset{f_{t}}{\longleftrightarrow
n}\right\}  .$
\end{definition}

We define by convention $f_{t}\left(  0\right)  =0.$ We now introduce a
particular subset of $t$-break points (inspired by section 2.6 of \cite{CIV})
which will provide a decomposition of the event $E\left(  [N\mathbf{x}%
]\right)  $ into suitable\ disjoint events.

Let $K$ be a positive constant that will be chosen sufficiently large.

\begin{definition}
$k,n\in\mathbb{Z}^{d}$ are said to be $h_{t}^{\eta,K}$-connected and the
corresponding event is denoted by $\left\{  k\overset{h_{t}^{\eta,K}%
}{\longleftrightarrow n}\right\}  ,$ if $n$ and $k$ are $h_{t}$-connected and
satisfy the following conditions:

\begin{itemize}
\item [1 -]$n\in k+\mathcal{C}_{\eta}\left(  t\right)  ;$

\item[2 -] $\mathbf{C}_{\{k,n\}}^{t}\subseteq\left(  k-Kx_{t}\right)
+\mathcal{C}_{\eta}\left(  t\right)  .$

As for the other connection functions we put $h_{t}^{\eta,K}\left(  0\right)
=h_{t}\left(  0\right)  =1.$
\end{itemize}
\end{definition}

\begin{definition}
\label{defeKbrpt}Let $k,n\in\mathbb{Z}^{d}$ be connected. We define $b_{1}$ to
be the element of the set
\begin{equation}
\left\{  l\in\mathbf{C}_{\left\{  k,n\right\}  }\cap\mathcal{H}_{k}%
^{t,+}:\mathbf{C}_{\left\{  l,n\right\}  }\cap\mathcal{H}_{l}^{t,+}%
\subseteq\left(  l-Kx_{t}\right)  +\mathcal{C}_{\eta}\left(  t\right)
\right\}
\end{equation}
satisfying:

\begin{itemize}
\item [a -]$\mathbf{C}_{\left\{  k,n\right\}  }\cap\mathcal{S}_{\{b_{1}%
-e,b_{1}\}}^{t}=\{b_{1}-e,b_{1}\};$

\item[b -] $\left(  b_{1}-k,t\right)  $ is maximal.

Given $b_{j}$ ($j\geq1$), we denote by $b_{j+1}$ the first$\ t$-break point of
$\mathbf{C}_{\left\{  k,n\right\}  }^{t}$ following $b_{j}$ satisfying the
following conditions:

\item[1 -] $b_{j}\in b_{j+1}+\mathcal{C}_{\eta}\left(  t\right)  ;$

\item[2 -] $\xi_{p}\left(  b_{j}-b_{j+1}\right)  \geq\frac{2K}{\eta};$

\item[3 -] $\mathbf{C}_{\{b_{j+1},b_{j}\}}^{t}\subseteq\left(  b_{j+1}%
-Kx_{t}\right)  +\mathcal{C}_{\eta}\left(  t\right)  ;$

provided it exists. We will call these points $\left(  \eta,K,t\right)
$-break points and denote their collection by $\mathbf{B}^{t}\left(
k,n;\eta,K\right)  .$
\end{itemize}
\end{definition}

\begin{definition}
Any two distinct points $k,n\in\mathbb{Z}^{d}$ are said to be $f_{t}^{\eta,K}%
$-connected if they are $h_{t}^{\eta,K}$-connected and $\mathbf{B}^{t}\left(
k,n;\eta,K\right)  =\emptyset.$
\end{definition}

Clearly, $f_{t}^{\eta,K}\left(  0\right)  =f_{t}\left(  0\right)  =0.$

\begin{lemma}
\label{noint}Let $t\in\partial\mathbf{K}^{p}$ and let $k,n\in\mathbb{Z}^{d}$,
with $\left(  t,n-k\right)  >0,$ be connected. It is possible to choose
$\eta\in\left(  0,1\right)  $ small enough and $K$ sufficiently large such
that, if
\begin{equation}
\mu:=\max\{j\geq2:b_{j}\in\mathbf{B}^{t}\left(  k,n;\eta,K\right)
\},\label{defi}%
\end{equation}
then $\mathbf{C}_{\{b_{\mu-1},n\}}\cap\mathcal{H}_{b_{\mu-1}}^{t,+}\subset
b_{\mu}+\mathcal{C}_{2\eta}\left(  t\right)  .$
\end{lemma}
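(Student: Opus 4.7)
My plan is to decompose the set $\mathbf{C}_{\{b_{\mu-1},n\}}\cap\mathcal{H}_{b_{\mu-1}}^{t,+}$ using the intermediate $(\eta,K,t)$-break points, and then translate the local cone inclusions built into the definition of these break points into a single cone inclusion anchored at $b_{\mu}$. In the first step I will use the bottleneck property of the $t$-break points (at each such $b$ the cluster inside $\mathcal{S}_{\{b-e,b+e\}}^{t}$ reduces to the column $\{b-e,b,b+e\}$) to argue that the piece of $\mathbf{C}_{\{b_{\mu-1},n\}}$ sitting in the strip between two consecutive $(\eta,K,t)$-break points $b_{l+1},b_{l}$ coincides with $\mathbf{C}_{\{b_{l+1},b_{l}\}}^{t}$, and likewise that the piece past $b_{1}$ coincides with $\mathbf{C}_{\{b_{1},n\}}\cap\mathcal{H}_{b_{1}}^{t,+}$. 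Condition~3 of Definition~\ref{defeKbrpt} and the defining property of $b_{1}$ then deliver that any such $x$ belongs to $(b_{l}-Kx_{t})+\mathcal{C}_{\eta}(t)$ for some $l\in\{1,\ldots,\mu-1\}$.

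Next I will convert this local inclusion into an inclusion in the wider $2\eta$-cone anchored at $b_{\mu}$. Writing $x-b_{\mu}=(x-b_{l}+Kx_{t})+(b_{l}-b_{\mu})-Kx_{t}$ and setting $w:=x-b_{l}+Kx_{t}$, $v:=b_{l}-b_{\mu}$, Step~1 gives $w\in\mathcal{C}_{\eta}(t)$, hence $(t,w)\geq(1-\eta)\xi_{p}(w)$. Iterating Condition~1 of Definition~\ref{defeKbrpt} along $b_{l},b_{l+1},\ldots,b_{\mu}$ and invoking the triangle inequality for $\xi_{p}$ yields $(t,v)\geq(1-\eta)\xi_{p}(v)$. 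Using the polarity relation $(t,x_{t})=\xi_{p}(x_{t})=1$ and the subadditivity of $\xi_{p}$, a direct calculation then gives
\begin{equation*}
(t,x-b_{\mu})-(1-2\eta)\xi_{p}(x-b_{\mu})\;\geq\;\eta\bigl(\xi_{p}(w)+\xi_{p}(v)\bigr)-2K(1-\eta).
\end{equation*}

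To close the estimate I will invoke Condition~2 of Definition~\ref{defeKbrpt}, which gives $\xi_{p}(b_{\mu-1}-b_{\mu})\geq 2K/\eta$. Combined with $\xi_{p}(v)\geq(t,v)\geq(t,b_{\mu-1}-b_{\mu})\geq(1-\eta)\xi_{p}(b_{\mu-1}-b_{\mu})$ (where the first inequality uses $t\in\mathbf{K}^{p}$), this produces $\xi_{p}(v)\geq 2K(1-\eta)/\eta$, making the right-hand side of the display above non-negative. Therefore $x-b_{\mu}\in\mathcal{C}_{2\eta}(t)$, as required. The smallness of $\eta$ and largeness of $K$ are not actually exploited inside this estimate (any $\eta\in(0,1)$ and $K>0$ suffice for the algebra); they are needed only so that the underlying renewal structure on which $\mathbf{B}^{t}(k,n;\eta,K)$ is built is nontrivial. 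The step I expect to be most delicate is the rigorous justification of the cluster decomposition in Step~1: one must verify that the bottleneck nature of the $t$-break points really does force $\mathbf{C}_{\{b_{\mu-1},n\}}\cap\mathcal{S}_{\{b_{l+1},b_{l}\}}^{t}=\mathbf{C}_{\{b_{l+1},b_{l}\}}^{t}$, so that Condition~3 of Definition~\ref{defeKbrpt} can be legitimately applied strip by strip; once this structural fact is pinned down, the remaining triangle-inequality calculations are routine.
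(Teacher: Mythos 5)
Your argument is correct and follows essentially the same route as the paper: both localize a point of $\mathbf{C}_{\{b_{\mu-1},n\}}\cap\mathcal{H}_{b_{\mu-1}}^{t,+}$ into a cone $\left(b_{l}-Kx_{t}\right)+\mathcal{C}_{\eta}\left(t\right)$ via condition 3 of Definition \ref{defeKbrpt} (and the defining property of $b_{1}$), then use condition 1 together with subadditivity of $\xi_{p}$ to control $b_{l}-b_{\mu}$, and finally absorb the $-Kx_{t}$ shift into the wider cone $\mathcal{C}_{2\eta}\left(t\right)$ by means of condition 2, $\xi_{p}\left(b_{\mu-1}-b_{\mu}\right)\geq 2K/\eta$. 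The only difference is organizational: you carry out the estimate in one step directly at $b_{\mu}$, whereas the paper first treats the adjacent strip ($l=\mu-1$) and then propagates through the remaining increments; your flagged delicate point (that the strip pieces of the cluster are really covered by the cone conditions of Definition \ref{defeKbrpt}) is treated at the same level of detail in the paper itself.
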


\begin{proof}
$\forall m\in\mathbf{C}_{\{b_{\mu-1},n\}}\cap\mathcal{H}_{b_{\mu-1}}^{t,+}$ we
set
\begin{equation}
l=l\left(  m\right)  :=\min\left\{  1\leq j\leq\mu-1:\left(  b_{j}-k,t\right)
\leq\left(  m-k,t\right)  \right\}
\end{equation}
and, since $1\leq l\leq\mu-1,$ we consider the following cases:

\begin{enumerate}
\item  If $l=\mu-1,$ by Definition \ref{defeKbrpt}, $m\in b_{\mu-1}%
-Kx_{t}+\mathcal{C}_{\eta}\left(  t\right)  $ and $b_{\mu-1}\in b_{\mu
}+\mathcal{C}_{\eta}\left(  t\right)  .$ Hence,
\begin{gather}
\left(  m-b_{\mu}+Kx_{t},t\right)  =\left(  m-b_{\mu-1}+Kx_{t}+b_{\mu
-1}-b_{\mu},t\right)  =\\
\left(  m-b_{\mu-1}+Kx_{t},t\right)  +\left(  b_{\mu-1}-b_{\mu},t\right)
\geq\nonumber\\
\left(  1-\eta\right)  \xi_{p}\left(  m-b_{\mu-1}+Kx_{t},t\right)  +\left(
1-\eta\right)  \xi_{p}\left(  b_{\mu-1}-b_{\mu}\right)  \geq\nonumber\\
\left(  1-\eta\right)  \xi_{p}\left(  m-b_{\mu}+Kx_{t},t\right) \nonumber
\end{gather}
that is $m\in b_{\mu}-Kx_{t}+\mathcal{C}_{\eta}\left(  t\right)  .$ If $m\in
b_{\mu}+\mathcal{C}_{\eta}\left(  t\right)  ,$ then the thesis is verified.
Otherwise, $\left(  m-b_{\mu},t\right)  \leq\left(  1-\eta\right)  \xi
_{p}\left(  m-b_{\mu}\right)  .$ Therefore,
\begin{align}
\xi_{p}\left(  m-b_{\mu}\right)   &  \geq\frac{\left(  m-b_{\mu},t\right)
}{1-\eta}\geq\frac{\left(  m-b_{\mu-1},t\right)  }{1-\eta}+\xi_{p}\left(
b_{\mu-1}-b_{\mu}\right) \\
&  \geq\xi_{p}\left(  b_{\mu-1}-b_{\mu}\right)  .\nonumber
\end{align}
Moreover,
\begin{gather}
\left(  m-b_{\mu},t\right)  =\left(  m-b_{\mu}+x_{t}K-x_{t}K,t\right)
=\left(  m-b_{\mu}+x_{t}K,t\right)  -K\geq\\
\left(  1-\eta\right)  \xi_{p}\left(  m-b_{\mu}+x_{t}K\right)  -K\geq
\nonumber\\
\left(  1-\eta\right)  \xi_{p}\left(  m-b_{\mu}\right)  -\left(
1-\eta\right)  K-K=\nonumber\\
\left(  1-2\eta\right)  \xi_{p}\left(  m-b_{\mu}\right)  +\eta\xi_{p}\left(
m-b_{\mu}\right)  -2K+\eta K\geq\nonumber\\
\left(  1-2\eta\right)  \xi_{p}\left(  m-b_{\mu}\right)  +\eta\xi_{p}\left(
b_{\mu}-b_{\mu-1}\right)  -2K\,,
\end{gather}
but, by condition 2 of Definition \ref{defeKbrpt}, $\xi_{p}\left(  b_{\mu
-1}-b_{\mu}\right)  \geq\frac{2K}{\eta}.$ Thus $\left(  m-b_{\mu},t\right)
\geq\left(  1-2\eta\right)  \xi_{p}\left(  m-b_{\mu}\right)  .$

\item  If $1\leq l\leq\mu-2,$%
\begin{equation}
m-b_{\mu}=b_{l+1}-b_{\mu}+m-b_{l+1}=m-b_{l+1}+\sum_{j=1}^{\mu-l-1}\left(
b_{l+j}-b_{l+j+1}\right)  ,\label{m-bi-1}%
\end{equation}
then, since by the previous case $m\in b_{l+1}+\mathcal{C}_{2\eta}\left(
t\right)  $ and by condition 1 of Definition \ref{defeKbrpt} $b_{l+j}%
-b_{l+j+1}\in\mathcal{C}_{\eta}\left(  t\right)  \subset\mathcal{C}_{2\eta
}\left(  t\right)  ,$ the r.h.s. of (\ref{m-bi-1}), as a sum of elements of
$\mathcal{C}_{2\eta}\left(  t\right)  $, also belongs to $\mathcal{C}_{2\eta
}\left(  t\right)  .$
\end{enumerate}
\end{proof}

For any $b\in\mathbb{Z}^{d},$ it is easy to see that
\begin{equation}
f_{t}^{\eta,K}\left(  b\right)  \leq h_{t}^{\eta,K}\left(  b\right)  \leq
h_{t}\left(  b\right)  \leq e^{-\xi_{p}\left(  b\right)  }.\label{f<exi}%
\end{equation}

From the previous definitions it follows that $h_{t}^{\eta,K}\left(  n\right)
$ satisfies a renewal equation analogous to the one satisfied by $h_{t}%
$-connected points given in \cite{CI} (4.3), i.e.
\begin{equation}
h_{t}^{\eta,K}\left(  n\right)  =\sum_{b\in\mathbb{Z}^{d}}h_{t}^{\eta
,K}\left(  b\right)  f_{t}^{\eta,K}\left(  n-b\right)  .\label{reneq}%
\end{equation}
Furthermore, it can be shown that$\,h_{t}^{\eta,K}\left(  [Nx]\right)  ,\,$for
$x$ in a neighbourhood of the dual point of $t,$ satisfies the same asymptotic
behaviour of $h_{t}\left(  [Nx]\right)  $ (see \cite{CI} Lemma 4.5). That is,
for any $\eta\in\left(  0,1\right)  $ and $K$ large enough,
\begin{equation}
h_{t}^{\eta,K}\left(  [Nx]\right)  =\frac{\Lambda_{p}\left(  \frac{x}{\left|
\left|  x\right|  \right|  },t\right)  }{\sqrt{2\pi N^{d-1}\left|  \left|
x\right|  \right|  ^{d-1}}}e^{-\xi_{p}\left(  [Nx]\right)  }\left(  1+o\left(
1\right)  \right)  ,\label{asht}%
\end{equation}
where $\Lambda_{p}\left(  \cdot,t\right)  $ is an analytic function on
$\mathbb{S}^{d-1}$ (different from that relative to $h_{t}$ appearing in
\cite{CI} (4.18)). The proof of this assertion relies on arguments similar to
the ones used in \cite{CI}\ to prove the Ornstein-Zernike theory for the
connectivity function and so it will be omitted.

\begin{definition}
Let $k,n\in\mathbb{Z}^{d}$ be connected. Then:

\begin{itemize}
\item [1 -]$k,n$ are called $\bar{h}_{t}$-connected and the corresponding
event is denoted by $\left\{  k\overset{\bar{h}_{t}}{\longleftrightarrow
n}\right\}  ,$ if $\mathbf{C}_{\{k,n\}}\cap\mathcal{S}_{\left\{
n-e,n\right\}  }^{t}=\left\{  n-e,n\right\}  .$

\item[2 -] $k,n$ are called $\bar{f}_{t}^{\eta,K}$-connected and the
corresponding event is denoted by $\left\{  k\overset{\bar{f}_{t}^{\eta,K}%
}{\longleftrightarrow n}\right\}  ,$ if they are $\bar{h}_{t}$-connected and
$\mathbf{B}^{t}\left(  k,n;\eta,K\right)  =\emptyset.$
\end{itemize}
\end{definition}

\begin{definition}
Let $k,n\in\mathbb{Z}^{d}$ be connected. Then:

\begin{itemize}
\item [1 -]$k,n$ are called $\tilde{h}_{t}^{\eta,K}$-connected and the
corresponding event is denoted by $\left\{  k\overset{\tilde{h}_{t}^{\eta,K}%
}{\longleftrightarrow n}\right\}  ,$ if:

\begin{itemize}
\item [1.a -]$\mathbf{C}_{\{k,n\}}\cap\mathcal{H}_{k}^{t,+}\subseteq\left(
k-Kx_{t}\right)  +\mathcal{C}_{\eta}\left(  t\right)  ;$

\item[1.b -] $\mathbf{C}_{\left\{  k,n\right\}  }\cap\mathcal{S}%
_{\{k-e,k\}}^{t}=\{k-e,k\}.$
\end{itemize}

\item[2 -] $k,n$ are called $\tilde{f}_{t}^{\eta,K}$-connected and the
corresponding event is denoted by $\left\{  k\overset{\tilde{f}_{t}^{\eta,K}%
}{\longleftrightarrow n}\right\}  ,$ if they are $\tilde{h}_{t}^{\eta,K}%
$-connected and $\mathbf{B}^{t}\left(  k,n;\eta,K\right)  =\emptyset.$
\end{itemize}
\end{definition}

The probabilities $\mathbb{P}_{p}\left\{  k\overset{\bar{h}_{t}}%
{\longleftrightarrow n}\right\}  :=\bar{h}_{t}\left(  k,n\right)  $ and
$\mathbb{P}_{p}\left\{  k\overset{\tilde{h}_{t}^{\eta,K}}{\longleftrightarrow
n}\right\}  :=\tilde{h}_{t}^{\eta,K}\left(  k,n\right)  $ are translation
invariant, bounded from above by $e^{-\xi_{p}\left(  n-k\right)  }$ and show
an asymptotic behaviour similar to that of $h_{t}^{\eta,K}$ given in
(\ref{asht}), that is there exist two analytic functions on $\mathbb{S}%
^{d-1},\,\bar{\Lambda}_{p}\left(  \cdot,t\right)  $ and $\tilde{\Lambda}%
_{p}\left(  \cdot,t\right)  $\ such that, for $x$ in a neighbourhood of
$x_{t},$
\begin{align}
\bar{h}_{t}\left(  [Nx]\right)   &  =\frac{\bar{\Lambda}_{p}\left(  \frac
{x}{\left|  \left|  x\right|  \right|  },t\right)  }{\sqrt{2\pi N^{d-1}\left|
\left|  x\right|  \right|  ^{d-1}}}e^{-\xi_{p}\left(  [Nx]\right)  }\left(
1+o\left(  1\right)  \right)  ,\\
\tilde{h}_{t}^{\eta,K}\left(  [Nx]\right)   &  =\frac{\tilde{\Lambda}%
_{p}\left(  \frac{x}{\left|  \left|  x\right|  \right|  },t\right)  }%
{\sqrt{2\pi N^{d-1}\left|  \left|  x\right|  \right|  ^{d-1}}}e^{-\xi
_{p}\left(  [Nx]\right)  }\left(  1+o\left(  1\right)  \right)  .\label{ashtt}%
\end{align}
Denoting by $d_{t}^{\eta,K}\left(  k,n\right)  $ the probability of the event
$\{k\longleftrightarrow n,\,\mathbf{B}^{t}\left(  k,n;\eta,K\right)
=\emptyset\},$ which is also translation invariant, we obtain
\begin{equation}
\mathbb{P}_{p}\{0\longleftrightarrow n\}=d_{t}^{\eta,K}\left(  n\right)
+\sum_{b_{1},b_{2}\in\mathbb{Z}^{d}}\bar{f}_{t}^{\eta,K}\left(  b_{2}\right)
h_{t}^{\eta,K}\left(  b_{1}-b_{2}\right)  \tilde{f}_{t}^{\eta,K}\left(
n-b_{1}\right)  .
\end{equation}

\subsection{Renormalization}

We now follow \cite{CI} subsection 2.2 and \cite{CIV} section 2.

Let us represent a self-avoiding open path $\gamma$ connecting the points
$k,n\in\mathbb{Z}^{d}$ by the sequence of points $\left(  n,i_{1}%
,..,i_{n-1},k\right)  .$ Given $\eta\in\left(  0,1\right)  \,$and a
sufficiently large renormalization scale $M>0,$ let\ $\gamma_{M}%
=\{n=x_{1},..,x_{m\left(  k\right)  }=k\}$ be the $M$\emph{-skeleton} of
$\gamma$ (\cite{CIV} section 2.2).

If, for any $t\in\partial\mathbf{K}^{p}.$
\begin{equation}
S_{t}\left(  x\right)  :=\xi_{p}\left(  x\right)  -\left(  t,x\right)
\label{defSte}%
\end{equation}
denotes the \emph{surcharge function} in the direction of $t,$ then
\begin{equation}
\mathcal{C}_{\eta}\left(  t\right)  =\{x\in\mathbb{R}^{d}:S_{t}\left(
x\right)  \leq\eta\xi_{p}\left(  x\right)  \}.
\end{equation}
Let us define
\begin{equation}
\mathbb{B}_{\eta}^{t}\left(  \gamma_{M}\right)  :=\{2\leq l\leq m\left(
k\right)  :x_{l-1}-x_{l}\notin\mathcal{C}_{\eta}\left(  t\right)  \},
\end{equation}
where, for $l\in\mathbb{B}_{\eta}^{t}\left(  \gamma_{M}\right)  ,\,x_{l-1}%
-x_{l}$ are the increments of the path $\gamma_{M}$ backtracking with respect
to $\mathcal{C}_{\eta}\left(  t\right)  .$

Notice that, by (\ref{defSte}), if $l\in\mathbb{B}_{\eta}^{t}\left(
\gamma_{M}\right)  ,$ then
\begin{equation}
S_{t}\left(  x_{l-1}-x_{l}\right)  \geq\eta M.\label{stimbt}%
\end{equation}

\begin{definition}
We call $x_{i}\in\gamma_{M},\,i=2,..,m\left(  k\right)  ,$ a $\left(
t,\eta\right)  $\emph{-good point} of $\gamma_{M},$ if
\begin{equation}
\gamma_{M}\cap\left(  x_{i}+\mathcal{C}_{\eta}\left(  t\right)  \right)
=\{x_{i},..,x_{1}\}
\end{equation}
and denote by $\mathcal{G}_{\eta}^{t}\left(  \gamma_{M}\right)  $ the set of
$\left(  t,\eta\right)  $\emph{-good points} of $\gamma_{M}.$
\end{definition}

We remark that $\mathcal{G}_{\eta}^{t}\left(  \gamma_{M}\right)  $ is a
totally ordered set with respect to the scalar product with $t$ and choose the
same ordering of the set of the $\left(  \eta,K,t\right)  $-break points given
in Definition \ref{defeKbrpt} that is, given $x_{j},x_{l}\in\mathcal{G}_{\eta
}^{t}\left(  \gamma_{M}\right)  ,\,x_{j}>x_{l}$ if $\left(  x_{j}-n,t\right)
<\left(  x_{l}-n,t\right)  .$

\begin{definition}
Let us set
\begin{equation}
\mathcal{B}_{\eta}^{t}\left(  \gamma_{M}\right)  :=\bigvee_{i\geq1}%
\{l_{i},...,r_{i}-1\},
\end{equation}
where
\begin{align}
l_{1} &  :=\max\{j\geq1:x_{j}\notin\mathcal{G}_{\eta}^{t}\left(  \gamma
_{M}\right)  \},\\
r_{1} &  :=\max\{1\leq j<l_{1}:x_{j}-x_{l_{1}}\notin\mathcal{C}_{\eta}\left(
t\right)  \},\\
l_{i} &  :=\max\{1\leq j\leq r_{i-1}:x_{j}\notin\mathcal{G}_{\eta}^{t}\left(
\gamma_{M}\right)  \},\\
r_{i} &  :=\max\{1\leq j<l_{i}:x_{j}-x_{l_{i}}\notin\mathcal{C}_{\eta}\left(
t\right)  \}
\end{align}
and denote by
\begin{equation}
\mathtt{x}\left(  \mathcal{B}_{\eta}^{t}\left(  \gamma_{M}\right)  \right)
:=\{x_{j}\in\gamma_{M}:j\in\mathcal{B}_{\eta}^{t}\left(  \gamma_{M}\right)
\},
\end{equation}
the set of $\left(  t,\eta\right)  $\emph{-bad points} of $\gamma_{M}.$
\end{definition}

We remark that, proceeding as in the proof of \cite{CIV} Lemma 2.2, for any
$\gamma_{M}=\{x_{1},..,x_{m}\},$ by (\ref{stimbt}) we obtain
\begin{equation}
\sum_{k}\sum_{j=l_{k}+1}^{r_{k}}S_{t}\left(  x_{j-1}-x_{j}\right)  \geq
c_{6}\eta M\left|  \mathcal{B}_{\eta}^{t}\left(  \gamma_{M}\right)  \right|
,\label{lbS}%
\end{equation}
with $c_{6}$ a positive constant.

Let $\mathbf{n\in}\mathbb{Z}^{3d},\,\mathbf{C}_{\mathbf{n}}:=\mathbf{C}%
_{\{n_{1},n_{2},n_{3}\}}$ and $k$ be connected to $n_{1},n_{2},n_{3}.$ Then,
following \cite{CI} subsection 2.4, we introduce the $M$\emph{-tree skeleton}
$\Gamma_{\mathbf{n}}^{M}$ of $\mathbf{C}_{\mathbf{n}}=\mathbf{C}%
_{\{k,n_{1},n_{2},n_{3}\}},$ such that
\begin{equation}
\Gamma_{\mathbf{n}}^{M}:=\bigcup_{i=1,2,3}\gamma_{M}^{i}\bigvee L_{M},
\end{equation}
where:

\begin{itemize}
\item  for $i=1,2,3,\,\gamma_{M}^{i}$ is the \emph{self-avoiding trunk} of
$\Gamma_{\mathbf{n}}^{M}$ in the direction $t_{i},$ dual to $n_{i}-k,$ defined
as in subsection 2.4 of \cite{CI}. On the other hand, if there exist three
disjoint self-avoiding open paths $\gamma_{1},\gamma_{2},\gamma_{3}, $
connecting $k$ to $n_{1},n_{2},n_{3},$ we can always choose the self-avoiding
trunks $\gamma_{M}^{1},\gamma_{M}^{2},\gamma_{M}^{3}$ to be the $M$-skeletons
of these paths. In this case, by construction, $\cap_{i=1,2,3}\gamma_{M}^{i}=\{k\}.$

\item $L_{M}$ is the set of \emph{leaves} of $\Gamma_{\mathbf{n}}^{M},$ i.e.
the set of those points of $\Gamma_{\mathbf{n}}^{M}$ which do not belong to
any of the self-avoiding trunks $\gamma_{M}^{1},\gamma_{M}^{2},\gamma_{M}%
^{3},$ defined by means of the construction given below.
\end{itemize}

Let us set $\mathbf{C}_{\mathbf{n}}^{M}:=\bigcup_{y\in\Gamma_{\mathbf{n}}^{M}%
}M\mathbf{U}^{p}\left(  y\right)  $ and, for $i=1,2,3,\,\mathbf{C}%
_{\{k,n_{i}\}}^{M}:=\bigcup_{y\in\Gamma_{i}^{M}}M\mathbf{U}^{p}\left(
y\right)  ,$ where $\Gamma_{i}^{M}:=\gamma_{M}^{i}%
{\textstyle\bigvee}
L_{M}^{i}$ with $L_{M}^{i}$ the set of leaves attached to the trunk
$\gamma_{M}^{i}.$

We say that $\mathbf{C}_{\mathbf{n}}$ is compatible with $\Gamma_{\mathbf{n}%
}^{M},$ and denote this fact by $\mathbf{C}_{\mathbf{n}}\sim\Gamma
_{\mathbf{n}}^{M},$ if $\Gamma_{\mathbf{n}}^{M}$ is the $M$-tree skeleton of
$\mathbf{C}_{\mathbf{n}},$ that is, if for any $m\in\mathbf{C}_{\mathbf{n}}$,
there exits $y\in\Gamma_{\mathbf{n}}^{M}$ such that $m\in M\mathbf{U}%
^{p}\left(  y\right)  .$ Furthermore, since $\Gamma_{\mathbf{n}}^{M}%
=\bigcup_{i=1,2,3}\Gamma_{i}^{M},$ from the compatibility relation
$\mathbf{C}_{\mathbf{n}}\sim\Gamma_{\mathbf{n}}^{M}$ follows the compatibility
relation $\mathbf{C}_{\mathbf{n}}\sim\Gamma_{i}^{M},\,i=1,2,3.$

The construction of $\Gamma_{\mathbf{n}}^{M}$ is similar the one described in
section 2.4 of \cite{CI} and can be carried out algoritmically.

\begin{itemize}
\item [\textbf{step 0}]Define $\Gamma_{\mathbf{n}}^{M}=\bigcup_{i=1,2,3}%
\gamma_{M}^{i}$ and accordingly $\mathbf{C}_{\mathbf{n}}^{M}.\,$Set $i:=1.$

\item[\textbf{step 1}] Define $\Gamma_{i}^{M}=\gamma_{M}^{i}=\{x_{1}%
=n_{i},..,x_{m_{i}}=k\}$ and accordingly $\mathbf{C}_{\{k,n_{i}\}}^{M}.$

\begin{itemize}
\item  If $\forall y\in\mathbf{C}_{\mathbf{n}}\backslash\left(  \left(
\mathbf{C}_{\mathbf{n}}^{M}\backslash\mathbf{C}_{\{k,n_{i}\}}^{M}\right)
\cap\mathbf{C}_{\mathbf{n}}\right)  ,\,\min_{z\in\mathbf{C}_{\{k,n_{i}\}}^{M}%
}\xi_{p}\left(  y-z\right)  \leq M$, then go to step $l_{i}+1.$

\item  Otherwise, preceed to the following update step.
\end{itemize}

\item[\textbf{step 2}] \emph{(update step)} Reorder the points of $\Gamma
_{i}^{M}=\{y_{1},..,y_{l_{i}}\}$ according to lexicographical order starting
from $y_{1}=n_{i}.$ Denoting by $l_{i}$ the cardinality of $\Gamma_{i}^{M},$
set $j:=1.$

\item[\textbf{step }$j$\textbf{\ }$\left(  j\leq l_{i}\right)  $] Screen the
lattice points $y\notin M\left(  \mathbf{U}^{p}\left(  y_{j}\right)
\backslash\partial\mathbf{U}^{p}\left(  y_{j}\right)  \right)  $ which are
endpoints of the edges crossing $M\partial\mathbf{U}^{p}\left(  y_{j}\right)
$ in the lexicographical order and denote their collection by $M\bar{\partial
}\mathbf{U}^{p}\left(  y_{j}\right)  .$

\begin{itemize}
\item  If there exists $y\in M\bar{\partial}\mathbf{U}^{p}\left(
y_{j}\right)  $ such that one can find a self-avoiding open path $\gamma_{y}$
leading from $y$ to $M\partial\mathbf{U}^{p}\left(  y\right)  $ inside
$\mathbb{Z}^{d}\backslash\mathbf{C}_{\mathbf{n}}^{M},$ then set
\begin{align}
L_{M}^{i} &  :=L_{M}^{i}\cup\{y\},\quad\Gamma_{i}^{M}:=\left(  L_{M}^{i}%
\cup\{y\}\right)
{\textstyle\bigvee}
\gamma_{M}^{i},\\
\mathbf{C}_{\{k,n_{i}\}}^{M} &  :=\mathbf{C}_{\{k,n_{i}\}}^{M}\cup
M\mathbf{U}^{p}\left(  y\right)  ,\quad\mathbf{C}_{\mathbf{n}}^{M}%
:=\mathbf{C}_{\mathbf{n}}^{M}\cup M\mathbf{U}^{p}\left(  y\right)
\end{align}
and go back to the update step.

\item  Otherwise, set $j:=j+1$ and go to step $j.$

\item[\textbf{step }$l_{i}+1$] Set $i:=i+1.$

\item  If $i=4,$ then stop.

\item  Otherwise, go to step 1.
\end{itemize}
\end{itemize}

By construction, $L_{M}=%
{\textstyle\bigvee_{i=1,2,3}}
L_{M}^{i}.$

We now define, for $R\in\mathbb{N}$ sufficiently large,
\begin{align}
j_{0} &  :=\min\left\{  j\geq1:x_{j}\in\mathcal{G}_{\eta}^{t_{i}}\left(
\gamma_{M}^{i}\right)  \right\}  \\
j_{l+1} &  :=\min\left\{  j\geq j_{l}:\left|  \left|  x_{j}-x_{j_{l}}\right|
\right|  \geq RM\,,\,x_{j}\in\mathcal{G}_{\eta}^{t_{i}}\left(  \gamma_{M}%
^{i}\right)  \right\}  \quad l\geq0
\end{align}
and consequently
\begin{align}
L_{i}^{M\,bad} &  :=\left\{  y\in L_{M}^{i}:y\notin\bigcup_{l\geq1}\left\{
\left\{  RM\mathbf{U}^{p}\left(  x_{j_{l}}\right)  +\mathcal{C}_{\eta}\left(
t_{i}\right)  \right\}  \cap\mathcal{S}_{\{x_{j_{l}},x_{j_{l-1}}\}}^{t_{i}%
}\right\}  \right\}  \\
\Gamma_{i}^{M\,bad} &  :=L_{i}^{M\,bad}\bigvee\mathtt{x}\left(  \mathcal{B}%
_{\eta}^{t_{i}}\left(  \gamma_{M}^{i}\right)  \right)  ,\quad\Gamma
_{i}^{M\,good}=\Gamma_{i}^{M}\backslash\Gamma_{i}^{M\,bad}\,,\\
\mathbf{C}_{\{k,n_{i}\}}^{M\,bad} &  :=\bigcup_{y\in\Gamma_{i}^{M\,bad}%
}M\mathbf{U}^{p}\left(  y\right)  ,\quad\mathbf{C}_{\{k,n_{i}\}}%
^{M\,good}=\bigcup_{y\in\Gamma_{i}^{M\,good}}M\mathbf{U}^{p}\left(  y\right)
\,.
\end{align}
Moreover,
\begin{equation}
\Gamma_{\mathbf{n}}^{M\,bad}:=\bigcup_{i=1,2,3}\Gamma_{i}^{M\,bad}%
,\quad\mathbf{C}_{\mathbf{n}}^{M\,bad}:=\bigcup_{i=1,2,3}\mathbf{C}%
_{\{k,n_{i}\}}^{M\,bad}\subset\mathbf{C}_{\mathbf{n}}^{M}.
\end{equation}
Hence, $\left|  \Gamma_{i}^{M\,bad}\right|  =\left|  L_{i}^{M\,bad}\right|
+\left|  \mathcal{B}_{\eta}^{t_{i}}\left(  \gamma_{M}^{i}\right)  \right|  $
and $\left|  \Gamma_{\mathbf{n}}^{M\,bad}\right|  \leq\sum_{i=1}^{3}\left|
\Gamma_{i}^{M\,bad}\right|  .$ Proceeding as in \cite{CI} Lemma 2.3, it is
possible to prove that, for any $\delta>0,$ there exists a positive constant
$c_{7}$ such that, for any $i=1,2,3,$
\begin{equation}
\mathbb{P}_{p}\left[  \{k\longleftrightarrow n_{i}\}\cap\left\{  \left|
L_{i}^{M\,bad}\right|  \geq\frac{\delta}{M}\left|  \left|  n_{i}-k\right|
\right|  \right\}  \right]  \leq e^{-\xi_{p}\left(  n_{i}-k\right)
-c_{7}\delta\left|  \left|  n_{i}-k\right|  \right|  },\label{stimbL}%
\end{equation}
Moreover, by (\ref{defSte}) and (\ref{lbS}), argueing as in \cite{CI} Lemma
2.2, we have that, for values of $M$ large enough, there exists a positive
constant $c_{8}$ such that, for any $i=1,2,3,$
\begin{equation}
\mathbb{P}_{p}\left[  \{\gamma_{M}^{i}:k\overset{\gamma_{M}^{i}}%
{\longleftrightarrow n}_{i}\}\cap\left\{  \left|  \mathcal{B}_{\eta}^{t_{i}%
}\left(  \gamma_{M}^{i}\right)  \right|  \geq\frac{\delta}{M}\left|  \left|
n_{i}-k\right|  \right|  \right\}  \right]  \leq e^{-c_{8}\delta\eta\left|
\left|  n_{i}-k\right|  \right|  -\xi_{p}\left(  n_{i}-k\right)
}.\label{stimbp}%
\end{equation}

\begin{definition}
\label{dgood}For any $\delta>0,$ an $M$-tree skeleton $\Gamma_{\mathbf{n}}%
^{M}$ is $\delta$\emph{-good} if, for any $i=1,2,3:$

\begin{enumerate}
\item $\left|  L_{i}^{M\,bad}\right|  \leq\frac{\delta}{M}\left|  \left|
n_{i}-k\right|  \right|  ;$

\item $\left|  \mathcal{B}_{\eta}^{t_{i}}\left(  \gamma_{M}^{i}\right)
\right|  \leq\frac{\delta}{M}\left|  \left|  n_{i}-k\right|  \right|  .$
\end{enumerate}
\end{definition}

Let us define the slabs
\begin{equation}
\mathcal{S}_{l,i}^{M,R}:=\mathcal{S}_{\{k+l4RM\frac{t_{i}}{\left|  \left|
t_{i}\right|  \right|  },k+\left(  l+1\right)  4RM\frac{t_{i}}{\left|  \left|
t_{i}\right|  \right|  }\}}^{t_{i}}\quad i=1,2,3;\,l\in\mathbb{N}.
\end{equation}
For any $i=1,2,3,\,\mathbf{C}_{\mathbf{n}}^{M}$ intersects $\mathcal{N}$
subsequent $\mathcal{S}_{l,i}^{M,R}$ slabs, with $\mathcal{N}\geq\frac{\left(
1-\eta\right)  c_{-}\left(  p\right)  }{4RM}\left|  \left|  n_{i}-k\right|
\right|  .$ Furthermore, if $\Gamma_{\mathbf{n}}^{M}$ is $\delta$-good, at
most $\frac{2\delta\left|  \left|  n_{i}-k\right|  \right|  }{M}\ $of the
$\mathcal{S}_{l,i}^{M,R}$ slabs contain points belonging to $\mathbf{C}%
_{\{k,n_{i}\}}^{M\,bad}.$ Hence, if we choose $\delta\in\left(  0,\frac
{\left(  1-\eta\right)  c_{-}\left(  p\right)  }{16R}\right)  ,$ the number of
$\mathcal{S}_{l,i}^{M,R}$ slabs containing only points of $\mathbf{C}%
_{\{k,n_{i}\}}^{M\,good},$ which we will call $\delta$-good slabs, is larger
than $\frac{1}{2}\frac{\left(  1-\eta\right)  c_{-}\left(  p\right)  }%
{4RM}\left|  \left|  n_{i}-k\right|  \right|  .$ Renumbering all the $\delta
$-good $\mathcal{S}_{l,i}^{M,R}$ slabs as $\mathcal{S}_{l_{1},i}%
^{M,R},..,\mathcal{S}_{l_{r},i}^{M,R},\,r\geq\frac{\left(  1-\eta\right)
c_{-}\left(  p\right)  }{8RM}\left|  \left|  n_{i}-k\right|  \right|  ,\,$for
every $\delta$-good $\mathcal{S}_{l_{j},i}^{M,R}$ slab and every cluster
$\mathbf{C}_{\mathbf{n}}$ compatible with $\Gamma_{\mathbf{n}},$ we have
\begin{equation}
\mathbf{C}_{\mathbf{n}}\cap\mathcal{S}_{l_{j},i}^{M,R}\subseteq\mathcal{R}%
_{l_{j},i}^{M,R}:=\bigcup_{x\in\Gamma_{i}^{M}\cap\mathcal{S}_{l_{j},i}^{M,R}%
}4M\mathbf{U}^{p}\left(  x\right)  .
\end{equation}
Choosing $K$ such that $\frac{K}{M}>1$ and setting
\begin{equation}
\operatorname*{dist}\left(  \mathcal{R}_{l_{j},i}^{M,R};\mathcal{R}%
_{l_{j^{\prime}},i}^{M,R}\right)  :=\min_{\substack{y\in\mathcal{R}_{l_{j}%
,i}^{M,R}\\y^{\prime}\in\mathcal{R}_{l_{j^{\prime}},i}^{M,R}}}\xi_{p}\left(
y-y^{\prime}\right)  \,,
\end{equation}
we define the application $\{1,..,r\}\ni j\longmapsto q\left(  j\right)
=j_{q}\in\{1,..,s\}$ such that
\begin{align}
j_{1} &  :=1\,,\\
j_{q+1} &  :=\min\left\{  j>j_{q}:\operatorname*{dist}\left(  \mathcal{R}%
_{l_{j},i}^{M,R};\mathcal{R}_{l_{j_{q}},i}^{M,R}\right)  \geq\frac{2K}{\eta
}\right\}  .
\end{align}
As in \cite{CI} section 3.3 and 4, for each $i=1,2,3,$ it is possible to
modify at most $c_{9}\left(  RM\right)  ^{d}$ bonds inside the regions
$\mathcal{R}_{l_{j_{1}},i}^{M,R},..,\mathcal{R}_{l_{j_{s}},i}^{M,R}%
,\,s\geq\frac{\eta\left(  1-\eta\right)  c_{-}\left(  p\right)  }%
{16KRM}\left|  \left|  n_{i}-k\right|  \right|  ,$ in an independent way such
that the resulting modified cluster is still compatible with $\Gamma_{i}^{M}$
and contains at least one $t_{i}$-break point located in each these regions.
By construction, these $t_{i}$-break points verify condition 2 of Definition
\ref{defeKbrpt}. Since for any $x\in\mathcal{C}_{\eta}\left(  t\right)  ,$%
\begin{equation}
\left|  \left|  P_{i}^{\perp}x\right|  \right|  \leq\sqrt{\frac{1-\left(
1-\eta\right)  ^{2}c_{-}^{2}\left(  p\right)  }{\left(  1-\eta\right)
^{2}c_{-}^{2}\left(  p\right)  }}\left(  t_{i},x\right)  ,
\end{equation}
we can choose $\frac{K}{M}>4\eta R$ large enough such that at least half the
points in $\mathcal{R}_{l_{j_{q+1}},i}^{M,R}$ belong to $z+\mathcal{C}_{\eta
}\left(  t_{i}\right)  ,$ for every $z\in\mathcal{R}_{l_{j_{q}},i}^{M,R}.$
Therefore, at least half of the $t_{i}$-break points of the modified cluster
satisfy condition 1 of Definition \ref{defeKbrpt}. We also remark that, since
by construction any $t_{i}$-break point of the modified cluster belongs to a
neighborhood $M\mathbf{U}^{p}\left(  x_{j}\right)  $ of some good point
$x_{j},$ one can choose $R>\frac{\left(  1-\eta\right)  c_{-}\left(  p\right)
}{2\sqrt{1-\left(  1-\eta\right)  ^{2}c_{-}^{2}\left(  p\right)  }}$ large
enough such that, for any $q=1,..,s,$ there are at least one $t_{i}$-break
point $b_{q}\in\mathcal{R}_{l_{j_{q}},i}^{M,R}$ and one $t_{i}$-break point
$b_{q+1}\in\mathcal{R}_{l_{j_{q+1}},i}^{M,R}$ which verify conditions 3 of
Definition \ref{defeKbrpt}, provided that the slabs $\mathcal{S}_{l_{j}%
,i}^{M,R},$ for $l_{j_{q}}<l_{j}<l_{j_{q+1}},$\ are $\delta$-good. On the
other hand, if $\mathcal{S}_{l_{j},i}^{M,R}$ contains points of $L_{i}%
^{M\,bad},$ at most only the $t_{i}$-break points of the modified cluster
belonging to $\mathcal{S}_{l_{j_{q+1}},i}^{M,R},..,\mathcal{S}_{l_{j_{q+\rho}%
},i}^{M,R},$ with $\rho\leq\left[  \frac{1}{4}s\right]  ,$ do not verify
condition 3 of \ref{defeKbrpt}.

Therefore, proceding as in the proofs of Lemma 4.1 in \cite{CI}, this
argument, together with the estimates (\ref{stimbL}) (\ref{stimbp}), proves
that for any $\eta>0$ sufficiently small, there exists $\delta_{1}=\delta
_{1}\left(  \eta,p\right)  $ and $c_{10}=c_{10}\left(  \eta,p\right)  >0$ such
that
\begin{equation}
\mathbb{P}_{p}\left[  \left\{  \left|  \mathbf{B}^{t_{i}}\left(  k,n_{i}%
;\eta,K\right)  \right|  <\delta_{1}\left|  \left|  n_{i}-k\right|  \right|
\right\}  \cap\left\{  k\overset{\bar{h}_{t_{i}}}{\longleftrightarrow}%
n_{i}\right\}  \right]  \leq e^{-\xi_{p}\left(  n_{i}-k\right)  -c_{10}\left|
\left|  n_{i}-k\right|  \right|  }.
\end{equation}
Since $\bar{h}_{t_{i}}$ and $\bar{f}_{t_{i}}^{\eta,K}$ satisfy a renewal
equation analogous to (\ref{reneq}), the last inequality implies that $\bar
{f}_{t_{i}}^{\eta,K}$ verifies a \emph{mass-gap} type condition similar to the
one verified by $f_{t_{i}}$ (\cite{CI} section 4), that is there exists a
positive constant $c_{11}=c_{11}\left(  \eta,p\right)  $ such that
\begin{equation}
\frac{\bar{f}_{t_{i}}^{\eta,K}\left(  k-n_{i}\right)  }{\bar{h}_{t_{i}}\left(
k-n_{i}\right)  }\leq e^{-c_{11}\left|  \left|  n_{i}-k\right|  \right|  }\,.
\end{equation}
Hence, since $f_{t_{i}}^{\eta,K}\left(  n_{i}-k\right)  <\bar{f}_{t_{i}}%
^{\eta,K}\left(  n_{i}-k\right)  ,$ a similar estimate holds also for
$f_{t_{i}}^{\eta,K}.$

The BK inequality and the previous construction also imply
\begin{equation}
\mathbb{P}_{p}[F\left(  k;\mathbf{n}\right)  \cap\{\left|  \mathbf{B}^{t_{i}%
}\left(  k,n_{i};\eta,K\right)  \right|  <\delta_{1}\left|  \left|
n_{i}-k\right|  \right|  \}]\leq e^{-\varphi_{p,\mathbf{n}}\left(  k\right)
-c_{10}\left|  \left|  n_{i}-k\right|  \right|  }\quad
i=1,2,3.\label{massgap1}%
\end{equation}

Let us now consider the event
\begin{equation}
G_{\delta_{2}}^{\eta,K}\left(  k;\mathbf{n}\right)  :=F\left(  k;\mathbf{n}%
\right)  \cap\bigcap_{i=1,2,3}\left\{  k\overset{\bar{h}_{t_{i}}%
}{\longleftrightarrow}n_{i}\,;\,\left|  \mathbf{B}^{t_{i}}\left(  k,n_{i}%
;\eta,K\right)  \right|  \leq\delta_{2}\left|  \left|  n_{i}-k\right|
\right|  \right\}
\end{equation}
with $\delta_{2}\leq\delta_{1}.$ To estimate the probability of $G_{\delta
_{2}}^{\eta,K}\left(  k;\mathbf{n}\right)  $ we can repeat the same
renormalization procedure previously set up to prove the mass-gap type
condition for the $f_{t_{i}}^{\eta,K}$\ connections along one direction
$\frac{t_{i}}{\left|  \left|  t_{i}\right|  \right|  },$ except that now we
need to consider all the three directions $\frac{t_{1}}{\left|  \left|
t_{1}\right|  \right|  },\frac{t_{2}}{\left|  \left|  t_{2}\right|  \right|
},\frac{t_{3}}{\left|  \left|  t_{3}\right|  \right|  },$ at once. Given
$M$-tree skeleton $\Gamma_{\mathbf{n}}^{M},$ by the BK inequality it follows
that $\mathbb{P}_{p}[\Gamma_{\mathbf{n}}^{M}]\leq\prod_{i=1}^{3}\mathbb{P}%
_{p}[\Gamma_{i}^{M}].$ Consequently we obtain
\begin{equation}
\mathbb{P}_{p}[G_{\delta_{2}}^{\eta,K}\left(  k;\mathbf{n}\right)  ]\leq
e^{-\varphi_{p,\mathbf{n}}\left(  k\right)  -c_{12}\sum_{i=1}^{3}\left|
\left|  n_{i}-k\right|  \right|  }.\label{massgap2}%
\end{equation}

\subsection{Proof of Theorem \ref{main}}

\begin{definition}
\label{def1}Given $\eta\in\left(  0,1\right)  $ and $K$ sufficiently large,
let $\mathbf{n}\in X_{3}^{\prime}$ and let $\mathbf{t=}\left(  t_{1}%
,t_{2},t_{3}\right)  $ be the vector in $\left(  \mathbb{R}^{d}\right)  ^{3}$
whose entries, $t_{1},t_{2},t_{3}$ are respectively the polar points to
$n_{1}-x_{0}\left(  \mathbf{n}\right)  ,n_{2}-x_{0}\left(  \mathbf{n}\right)
,n_{3}-x_{0}\left(  \mathbf{n}\right)  .$ By (\ref{defi}), for any
$k\in\mathbb{Z}^{d}$ and $i=1,2,3,\,$we denote by $b_{i,\mu_{i}}$ the element
of $\mathbf{B}^{t_{i}}\left(  k,n_{i};\eta,K\right)  $ such that the scalar
product $\left|  \left(  n_{i}-b_{i,\mu_{i}},t_{i}\right)  \right|  $ is
maximal and define $T\left(  \mathbf{b};k,\mathbf{n}\right)  =T\left(
b_{1},b_{2},b_{3};k,\mathbf{n}\right)  $ to be the event that $k$ is connected
to $n_{1},n_{2},n_{3}$ by three self-avoiding disjoint open paths incidents in
$b_{1},b_{2},b_{3},$ these being the positions assumed respectively by the
random points $b_{1,\mu_{1}-1},b_{2,\mu_{2}-1},b_{3,\mu_{3}-1}.$ Moreover, any
configuration $\mathbf{b}\in\left(  \mathbb{Z}^{d}\right)  ^{3}$ for the
$\left(  \eta,K,\mathbf{t}\right)  $-break points $b_{1,\mu_{1}-1}%
,b_{2,\mu_{2}-1},b_{3,\mu_{3}-1}$ will be called \emph{admissible} for $k$ if
$\mathbb{P}_{p}[T\left(  \mathbf{b};k,\mathbf{n}\right)  ]>0.$
\end{definition}

We remark that, given $\mathbf{n}\in X_{3}^{\prime}$ and any $k_{1},k_{2}%
\in\mathbb{Z}^{d},$ if we choose two distinct vectors $\mathbf{b}%
_{1},\mathbf{b}_{2}\in\left(  \mathbb{Z}^{d}\right)  ^{3},$ then $T\left(
\mathbf{b}_{1};k_{1},\mathbf{n}\right)  $ and $T\left(  \mathbf{b}_{2}%
;k_{2},\mathbf{n}\right)  $ are disjoint.

For any $\mathbf{b}\in\left(  \mathbb{Z}^{d}\right)  ^{3}$, let $\mathcal{T}%
\left(  \mathbf{b}\right)  :=\bigcap_{i=1}^{3}\mathcal{H}_{b_{i}}^{t_{i},-}.$
Then, from the previous definition, it follows that $\mathbf{b}$ cannot be
admissible for $k\in\mathbb{Z}^{d}$ if $k\notin\mathcal{T}\left(
\mathbf{b}\right)  .$ If $k_{1},k_{2}\in\mathbb{Z}^{d}$ and $\mathbf{b}$ is
admissible for both $k_{1}$ and $k_{2},$ then $T\left(  \mathbf{b}%
;k_{1},\mathbf{n}\right)  $ and $T\left(  \mathbf{b};k_{2},\mathbf{n}\right)
$ need not be disjoint.

Therefore, the event $F\left(  k;\mathbf{n}\right)  $ allows the
decomposition
\begin{equation}
F\left(  k;\mathbf{n}\right)  =\bigvee_{b_{1},b_{2},b_{3}\in\mathbb{Z}^{d}%
}T\left(  b_{1},b_{2},b_{3};k,\mathbf{n}\right)  \bigvee T^{\ast}\left(
k;\mathbf{n}\right)  ,\label{decF}%
\end{equation}
with $T^{\ast}\left(  k;\mathbf{n}\right)  =F\left(  k;\mathbf{n}\right)
\cap\bigcup_{i=1,2,3}\left\{  \left|  \mathbf{B}^{t_{i}}\left(  k,n_{i}%
;\eta,K\right)  \right|  \leq1\right\}  .$

Before entering ito details, let us describe the main ideas of the proof of
Theorem (\ref{main}).

As a first step, in the following proposition, we derive the asymptotic
behaviour for the probability of the event $F\left(  [Nx_{0}\left(
\mathbf{x}\right)  +\sqrt{N}y];\left[  N\mathbf{x}\right]  \right)  $ that the
point $[Nx_{0}\left(  \mathbf{x}\right)  +\sqrt{N}y]$ is connected by three
disjoint self-avoiding open paths to the points $[Nx_{1}],[Nx_{2}],[Nx_{3}%
],$\ as $N$ goes to infinity. The event $F\left(  [Nx_{0}\left(
\mathbf{x}\right)  +\sqrt{N}y];\left[  N\mathbf{x}\right]  \right)  ,$ apart
from terms that can be neglected, can be decomposed into a partition according
to the positions of the points $b_{1},b_{2},b_{3},$ as shown in (\ref{decF}),
where the distances of $b_{i}$'s from $[Nx_{0}\left(  \mathbf{x}\right)
+\sqrt{N}y]$ can be assumed to be smaller than $N^{\beta}$ with $0<\beta
<\frac{1}{2}.$ The definition of the $b_{i}$'s implies that the probability of
a term of such a decomposition is of the form
\begin{equation}
g_{p}^{\eta,K}\left(  k;\mathbf{b}\right)  \prod_{i=1,2,3}\tilde{h}_{t_{i}%
}^{\eta,K}\left(  [Nx_{i}]-b_{i}\right)  ,
\end{equation}
with $k=[Nx_{0}\left(  \mathbf{x}\right)  +\sqrt{N}y],$ where $g_{p}^{\eta
,K}\left(  k;\mathbf{b}\right)  $ is a translationally invariant function. The
desired asymptotics follows then from the Ornstein-Zernike estimate
(\ref{ashtt}) for $\tilde{h}_{t}^{\eta,K}$ and the expansion of the function
$\xi_{p}$ that appears in it.

The second step is to obtain the asymptotic behaviour of the probability of
the event $E\left(  [N\mathbf{x}]\right)  $ that appears in the denominator of
the conditional expectation (\ref{PFE}) as $N$ tends to infinity.\thinspace As
before, apart from terms that can be neglected, we can decompose this event
into a partition according to the positions of the points $b_{1},b_{2},b_{3}.$
The probability of a term of such a decomposition can be written as
\begin{equation}
g_{p}^{\eta,K}\left(  \mathbf{b}\right)  \prod_{i=1,2,3}\tilde{h}_{t_{i}%
}^{\eta,K}\left(  [Nx_{i}]-b_{i}\right)  ,
\end{equation}
where $g_{p}^{\eta,K}\left(  \mathbf{b}\right)  $\ is translation invariant.
It can be assumed, neglecting terms of small probability, that the distances
among the $b_{i}$'s are smaller than $N^{\beta},$ with $\beta\in\left(
0,\frac{1}{2}\right)  ,$ and that the distance of $x_{0}\left(  [N\mathbf{x]}%
\right)  $ from each of the $b_{i}$'s are smaller than $N^{\alpha},$ with
$\alpha\in\left(  \frac{1}{2},1\right)  .$ The Ornstein-Zernike estimate
(\ref{ashtt}) for $\tilde{h}_{t}^{\eta,K}$ and the expansion of the function
$\xi_{p}$ that appears in it give then the desired asymptotics.

\begin{proposition}
For $\mathbf{x}\in X_{3}^{\prime},$ let $x_{0}=x_{0}\left(  \mathbf{x}\right)
$ be the unique minimizer of $\varphi_{p,\mathbf{x}}.$ Then, for any
$y\in\mathbb{R}^{d},$ there exists a positive real analytic function
$\Theta_{p}$ on $X_{3}^{\prime}$ such that, for $d\geq2$ and $p<p_{c}\left(
d\right)  ,$
\begin{equation}
\mathbb{P}_{p}\left[  F\left(  [Nx_{0}\left(  \mathbf{x}\right)  +\sqrt
{N}y];\left[  N\mathbf{x}\right]  \right)  \right]  =\frac{\Theta_{p}\left(
\mathbf{x}\right)  }{\left(  2\pi N^{d-1}\right)  ^{\frac{3}{2}}}%
e^{-\varphi_{p,[N\mathbf{x}]}\left(  x_{0}\left(  [N\mathbf{x}]\right)
\right)  -\frac{\left(  y,H_{\varphi}\left(  x_{0}\left(  \mathbf{x}\right)
,\mathbf{x};p\right)  y\right)  }{2}}\left(  1+o\left(  1\right)  \right)
.\label{lbPF}%
\end{equation}
\end{proposition}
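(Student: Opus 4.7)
The plan is to use the decomposition (\ref{decF}) with $k := [Nx_0(\mathbf{x}) + \sqrt{N}y]$ and show that the main contribution to $\mathbb{P}_p[F(k;[N\mathbf{x}])]$ comes from summing $\mathbb{P}_p[T(\mathbf{b};k,[N\mathbf{x}])]$ over penultimate break-point configurations $\mathbf{b} = (b_1,b_2,b_3)$ that stay close to $k$. First, I would discard the exceptional term $T^{\ast}(k;[N\mathbf{x}])$ using (\ref{massgap1}): since $|[Nx_i]-k| = \Theta(N)$ while $\varphi_{p,[N\mathbf{x}]}(k)-\varphi_{p,[N\mathbf{x}]}(x_0([N\mathbf{x}])) = O(1)$ for $y$ fixed, its probability is $e^{-cN}$ smaller than the target and is absorbed into $(1+o(1))$. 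For the main sum, the renewal structure at $b_i = b_{i,\mu_i-1}$, combined with the BK inequality, gives the factorization
\begin{equation}
\mathbb{P}_p[T(\mathbf{b};k,[N\mathbf{x}])] = g_p^{\eta,K}(k;\mathbf{b}) \prod_{i=1}^3 \tilde{h}_{t_i}^{\eta,K}([Nx_i]-b_i),
\end{equation}
where $g_p^{\eta,K}(k;\mathbf{b})$ is translation-invariant and gives the probability of the three-legged ``cap'' joining $k$ to the $b_i$'s with the penultimate-break-point condition. A three-way analogue of the renormalization argument leading to (\ref{massgap2}) yields $g_p^{\eta,K}(k;\mathbf{b}) \le \prod_i e^{-\xi_p(b_i-k) - c|b_i-k|}$, so the $\mathbf{b}$-sum may be truncated to $\max_i |b_i-k| \le N^{\beta}$ for some $\beta \in (0,1/2)$ with negligible error.

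Within this truncation, apply the Ornstein--Zernike asymptotic (\ref{ashtt}) to each $\tilde{h}_{t_i}^{\eta,K}([Nx_i]-b_i)$. Since $|[Nx_i]-b_i| = N\|x_i - x_0(\mathbf{x})\| + O(\sqrt{N})$ and the corresponding unit vector tends to $(x_i-x_0(\mathbf{x}))/\|x_i-x_0(\mathbf{x})\|$, the OZ prefactors collapse to $(2\pi N^{d-1})^{-3/2}\prod_i \tilde{\Lambda}_p(\widehat{x_i - x_0},t_i)/\|x_i - x_0\|^{(d-1)/2}$ times $1+o(1)$. For the exponents, Taylor-expanding $\xi_p([Nx_i]-b_i)$ around $[Nx_i]-k$ and using the degree-zero homogeneity of $\nabla\xi_p$ together with the identification $t_i = \nabla\xi_p(x_i-x_0(\mathbf{x}))$ (polar point) produces a linear piece $-(t_i,b_i-k)+o(1)$ and a quadratic remainder $O(N^{2\beta-1}) = o(1)$, whence
\begin{equation}
\sum_{i=1}^3 \xi_p([Nx_i]-b_i) = \varphi_{p,[N\mathbf{x}]}(k) - \sum_{i=1}^3 (t_i,b_i-k) + o(1).
\end{equation}
A further expansion of $\varphi_{p,[N\mathbf{x}]}$ around $x_0([N\mathbf{x}]) = Nx_0(\mathbf{x})+O(1)$, using (\ref{condmin}) to kill the linear term, the identity $H_\varphi(Nz,N\mathbf{z};p) = N^{-1} H_\varphi(z,\mathbf{z};p)$ coming from homogeneity of $\xi_p$, and (\ref{stimHfi}), yields $\varphi_{p,[N\mathbf{x}]}(k) = \varphi_{p,[N\mathbf{x}]}(x_0([N\mathbf{x}])) + \tfrac12\,(y,H_\varphi(x_0(\mathbf{x}),\mathbf{x};p)\,y) + o(1)$.

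Collecting everything, the exponential factor is precisely the one in (\ref{lbPF}), and after setting $\mathbf{v} = \mathbf{b}-k$ the residual sum $\sum_{\mathbf{v}} g_p^{\eta,K}(0;\mathbf{v})\exp(\sum_i(t_i,v_i))$ converges absolutely, since the mass-gap bound on $g_p^{\eta,K}$ gives the summable pointwise majorant $\prod_i e^{-c|v_i|}$. Defining
\begin{equation}
\Theta_p(\mathbf{x}) := \left(\prod_{i=1}^3 \frac{\tilde{\Lambda}_p(\widehat{x_i - x_0(\mathbf{x})},t_i)}{\|x_i - x_0(\mathbf{x})\|^{(d-1)/2}}\right) \sum_{\mathbf{v}\in(\mathbb{Z}^d)^3} g_p^{\eta,K}(0;\mathbf{v})\,\exp\Bigl(\sum_{i=1}^3(t_i,v_i)\Bigr)
\end{equation}
yields the stated formula; real analyticity of $\Theta_p$ on $X_3^\prime$ follows from analyticity of $\tilde{\Lambda}_p$, of $\xi_p$ and hence of $x_0(\mathbf{x})$ (an analytic minimizer of a strictly convex analytic function by Lemma \ref{lbqf}) and of $t_i(\mathbf{x})=\nabla\xi_p(x_i-x_0(\mathbf{x}))$, and of the convergent exponentially-weighted series in $t_i$. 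The main technical obstacle is the initial factorization step: to justify, for three paths joined at a common vertex $k$, the clean decoupling into the ``cap'' factor $g_p^{\eta,K}$ and the three independent OZ tails $\tilde{h}_{t_i}^{\eta,K}$, and to propagate the $o(1)$ errors from the OZ and Taylor expansions uniformly in $\mathbf{b}$ across the truncated sum.
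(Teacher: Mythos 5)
Your proposal follows essentially the same route as the paper: the decomposition (\ref{decF}) with the $T^{\ast}$ term discarded via (\ref{massgap1}), the factorization into $g_{p}^{\eta,K}$ times the three $\tilde{h}_{t_{i}}^{\eta,K}$ factors, truncation of the $\mathbf{b}$-sum by the mass-gap bound (\ref{stimghK}), the Ornstein--Zernike input (\ref{ashtt}), and the Taylor expansions of $\xi_{p}$ and $\varphi_{p,[N\mathbf{x}]}$ leading to the same $\Theta_{p}$. The only cosmetic differences are that you expand $\xi_{p}$ around $[Nx_{i}]-k$ rather than $[Nx_{i}]-x_{0}([N\mathbf{x}])$ (equivalent), and that the factorization is an identity coming from the renewal structure at the last $(\eta,K,t_{i})$-break points (disjointness of the cones and independence of the corresponding bond sets), not from the BK inequality, which only yields an upper bound.
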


\begin{proof}
Let $k\in\mathbb{Z}^{d}$ be such that $\left|  \left|  k-x_{0}\left(
[N\mathbf{x}]\right)  \right|  \right|  \leq c_{13}N^{\frac{1}{2}}$ and denote
by $t_{i}=t_{i}\left(  \mathbf{x}\right)  $ the polar point to $x_{0}\left(
\mathbf{x}\right)  -x_{i},\,i=1,2,3.$ We can choose $\eta\in\left(
0,1\right)  $ small enough such that\ $\mathcal{C}_{2\eta}\left(
t_{i}\right)  \cap\mathcal{C}_{2\eta}\left(  t_{j}\right)  =0,\,i\neq j=1,2,3,
$ then, from Lemma \ref{noint}, it follows that for\ $i=1,2,3,\,\mathbf{C}%
_{\{b_{i},[Nx_{i}]\}}\cap\mathcal{H}_{b_{i,\mu_{i}}}^{t_{i},+}\subset
b_{i,\mu_{i}}+\mathcal{C}_{2\eta}\left(  t_{i}\right)  $ and for $i\neq j$
\begin{equation}
\left(  b_{i,\mu_{i}}+\mathcal{C}_{2\eta}\left(  t_{i}\right)  \right)
\cap\left(  b_{j,\mu_{j}}+\mathcal{C}_{2\eta}\left(  t_{j}\right)  \right)
=\emptyset.
\end{equation}

If, for any $\mathbf{b}\in\left(  \mathbb{Z}^{d}\right)  ^{3}$ and $K$
sufficiently large, we define the function
\begin{equation}
g_{p}^{\eta,K}\left(  k;\mathbf{b}\right)  =g_{p}^{\eta,K}\left(
k;b_{1},b_{2},b_{3}\right)  :=\mathbb{P}_{p}[G^{\eta,K}\left(  k;b_{1}%
,b_{2},b_{3}\right)  ],\label{defgeK}%
\end{equation}
which is the probability of the event
\begin{equation}
G^{\eta,K}\left(  k;b_{1},b_{2},b_{3}\right)  =G^{\eta,K}\left(
k;\mathbf{b}\right)  :=F\left(  k;\mathbf{b}\right)  \cap\bigcap_{i=1}%
^{3}\left\{  k\overset{\bar{h}_{t_{i}}}{\longleftrightarrow b_{i}}\,,\,\left|
\mathbf{B}^{t_{i}}\left(  k,b_{i};\eta,K\right)  \right|  =1\right\}  ,
\end{equation}
then, by Definition \ref{def1}, we have
\begin{equation}
\mathbb{P}_{p}[T\left(  k;\mathbf{b,[}N\mathbf{x]}\right)  ]:=g_{p}^{\eta
,K}\left(  k;\mathbf{b}\right)  \prod_{i=1,2,3}\tilde{h}_{t_{i}}^{\eta
,K}\left(  [Nx_{i}]-b_{i}\right)  .
\end{equation}
Notice that $g_{p}^{\eta,K}\left(  k;\mathbf{b}\right)  $ is translation
invariant, i.e.
\begin{equation}
g_{p}^{\eta,K}\left(  k+u;b_{1}+u,b_{2}+u,b_{3}+u\right)  =g_{p}^{\eta
,K}\left(  k;b_{1},b_{2},b_{3}\right)  \,,\quad u\in\mathbb{Z}^{d}\,.
\end{equation}

By (\ref{decF}), we have
\begin{equation}
\mathbb{P}_{p}[F\left(  k;\left[  N\mathbf{x}\right]  \right)  ]=\sum
_{b_{1},b_{2},b_{3}\in\mathbb{Z}^{d}}\mathbb{P}_{p}[T\left(  b_{1},b_{2}%
,b_{3};k,[N\mathbf{x]}\right)  ]+\mathbb{P}_{p}[T^{\ast}\left(  k;[N\mathbf{x}%
]\right)  ].
\end{equation}
Since by (\ref{massgap1}),
\begin{equation}
\mathbb{P}_{p}[T^{\ast}\left(  k;[N\mathbf{x}]\right)  ]\leq\sum
_{i=1,2,3}e^{-\varphi_{p,[N\mathbf{x]}}\left(  k\right)  -c_{10}\left|
\left|  [Nx_{i}]-k\right|  \right|  },\label{stimPT*}%
\end{equation}
then, by (\ref{defgeK}), we need to estimate
\begin{align}
&  \sum_{b_{1},b_{2},b_{3}\in\mathbb{Z}^{d}}\mathbb{P}_{p}[T\left(
b_{1},b_{2},b_{3};k,[N\mathbf{x]}\right)  ]\\
&  =\sum_{b_{1},b_{2},b_{3}\in\mathbb{Z}^{d}}g_{p}^{\eta,K}\left(
k;b_{1},b_{2},b_{3}\right)  \prod_{i=1,2,3}\tilde{h}_{t_{i}}^{\eta,K}\left(
[Nx_{i}]-b_{i}\right) \nonumber\\
&  =e^{-\varphi_{p,[N\mathbf{x]}}\left(  k\right)  }\sum_{b_{1},b_{2},b_{3}%
\in\mathbb{Z}^{d}}g_{p}^{\eta,K}\left(  k;b_{1},b_{2},b_{3}\right)
\prod_{i=1,2,3}e^{\xi_{p}\left(  [Nx_{i}]-k\right)  }\tilde{h}_{t_{i}}%
^{\eta,K}\left(  [Nx_{i}]-b_{i}\right)  .\nonumber
\end{align}
By (\ref{massgap2}), it follows that
\begin{equation}
g_{p}^{\eta,K}\left(  k;b_{1},b_{2},b_{3}\right)  \leq e^{-\varphi
_{p,\mathbf{b}}\left(  k\right)  -c_{11}\sum_{i=1,2,3}\left|  \left|
b_{i}-k\right|  \right|  }\label{stimghK}%
\end{equation}
But, by the convexity of $\xi_{p},$ there exists a positive constant $c_{14} $
such that, $\forall\beta\in\left(  0,1\right)  $ and $N$ large enough, by
(\ref{stimghK}), (\ref{f<exi}) and (\ref{lbxi}), we have
\begin{align}
&  \sum_{b_{1}\in\mathbb{Z}^{d}\,:\,\left|  \left|  b_{1}-k\right|  \right|
>N^{\beta}}\sum_{b_{2},b_{3}\in\mathbb{Z}^{d}}g_{p}^{\eta,K}\left(
k;b_{1},b_{2},b_{3}\right)  \prod_{i=1}^{3}e^{\xi_{p}\left(  [Nx_{i}%
]-k\right)  }\tilde{h}_{t_{i}}^{\eta,K}\left(  [Nx_{i}]-b_{i}\right)
\label{stim1}\\
&  \leq\sum_{b_{1}\in\mathbb{Z}^{d}\,:\,\left|  \left|  b_{1}-k\right|
\right|  >N^{\beta}}e^{-c_{11}\left|  \left|  b_{1}-k\right|  \right|
+\xi_{p}\left(  [Nx_{1}]-k\right)  -\xi_{p}\left(  [Nx_{1}]-b_{1}\right)
-\xi_{p}\left(  b_{1}-k\right)  }\times\nonumber\\
&  \times\prod_{i=1}^{2}\sum_{b_{i}\in\mathbb{Z}^{d}}e^{-c_{11}\left|  \left|
b_{i}-k\right|  \right|  +\xi_{p}\left(  [Nx_{i}]-k\right)  -\xi_{p}\left(
[Nx_{i}]-b_{i}\right)  -\xi_{p}\left(  b_{i}-k\right)  }\leq e^{-c_{14}%
N^{\beta}}\nonumber
\end{align}
and analogous estimates hold for the sums over $b_{2}$ and $b_{3}.$ Thus, by
(\ref{ashtt}), we are left with the estimate of
\begin{align}
&  \sum_{\substack{b_{1}\in\mathbb{Z}^{d}\,:\,\left|  \left|  b_{1}-k\right|
\right|  \leq N^{\beta} \\b_{2}\in\mathbb{Z}^{d}\,:\,\left|  \left|
b_{2}-k\right|  \right|  \leq N^{\beta} \\b_{3}\in\mathbb{Z}^{d}\,:\,\left|
\left|  b_{3}-k\right|  \right|  \leq N^{\beta}}}g_{p}^{\eta,K}\left(
k;b_{1},b_{2},b_{3}\right)  \prod_{i=1,2,3}e^{\xi_{p}\left(  [Nx_{i}%
]-k\right)  }\tilde{h}_{t_{i}}^{\eta,K}\left(  [Nx_{i}]-b_{i}\right)
\label{stimteta}\\
&  =\sum_{\substack{b_{1}\in\mathbb{Z}^{d}\,:\,\left|  \left|  b_{1}-k\right|
\right|  \leq N^{\beta} \\b_{2}\in\mathbb{Z}^{d}\,:\,\left|  \left|
b_{2}-k\right|  \right|  \leq N^{\beta} \\b_{3}\in\mathbb{Z}^{d}\,:\,\left|
\left|  b_{3}-k\right|  \right|  \leq N^{\beta}}}g_{p}^{\eta,K}\left(
k;b_{1},b_{2},b_{3}\right)  \prod_{i=1,2,3}\frac{\tilde{\Lambda}_{p}\left(
\frac{Nx_{i}-b_{i}}{\left|  \left|  Nx_{i}-b_{i}\right|  \right|  }%
,t_{i}\right)  }{\sqrt{2\pi N^{d-1}\left|  \left|  x_{i}-\frac{b_{i}}%
{N}\right|  \right|  ^{d-1}}}\times\nonumber\\
&  \times\exp[\xi_{p}\left(  [Nx_{i}]-k\right)  -\xi_{p}\left(  [Nx_{i}%
]-b_{i}\right)  ]\left(  1+o\left(  1\right)  \right)  .\nonumber
\end{align}
By the convexity of $\xi_{p}$ and by (\ref{stimghK}), we have
\begin{align}
&  \sum_{\substack{b_{1}\in\mathbb{Z}^{d}\,:\,\left|  \left|  b_{1}-k\right|
\right|  \leq N^{\beta} \\b_{2}\in\mathbb{Z}^{d}\,:\,\left|  \left|
b_{2}-k\right|  \right|  \leq N^{\beta} \\b_{3}\in\mathbb{Z}^{d}\,:\,\left|
\left|  b_{3}-k\right|  \right|  \leq N^{\beta}}}g_{p}^{\eta,K}\left(
k;b_{1},b_{2},b_{3}\right)  \prod_{i=1,2,3}\exp[\xi_{p}\left(  [Nx_{i}%
]-k\right)  -\xi_{p}\left(  [Nx_{i}]-b_{i}\right)  ]\label{stimghK2}\\
&  \leq\left(  \sum_{\substack{b_{1}\in\mathbb{Z}^{d}\,:\,\left|  \left|
b_{1}-k\right|  \right|  \leq N^{\beta}}}e^{-c_{12}\left|  \left|
b_{1}-k\right|  \right|  }\right)  ^{3}.\nonumber
\end{align}
Moreover, by translation invariance, setting $\forall i=1,2,3,\,b_{i}%
=a_{i}+k,$ for $\beta\in\left(  0,\frac{1}{2}\right)  ,$
\begin{align}
&  \sum_{\substack{b_{1}\in\mathbb{Z}^{d}\,:\,\left|  \left|  b_{1}-k\right|
\right|  \leq N^{\beta} \\b_{2}\in\mathbb{Z}^{d}\,:\,\left|  \left|
b_{2}-k\right|  \right|  \leq N^{\beta} \\b_{3}\in\mathbb{Z}^{d}\,:\,\left|
\left|  b_{3}-k\right|  \right|  \leq N^{\beta}}}g_{p}^{\eta,K}\left(
k;b_{1},b_{2},b_{3}\right)  \prod_{i=1,2,3}\frac{\tilde{\Lambda}_{p}\left(
\frac{Nx_{i}-b_{i}}{\left|  \left|  Nx_{i}-b_{i}\right|  \right|  }%
,t_{i}\right)  }{\sqrt{2\pi N^{d-1}\left|  \left|  x_{i}-\frac{b_{i}}%
{N}\right|  \right|  ^{d-1}}}\times\label{stimPFb}\\
&  \times\exp[\xi_{p}\left(  [Nx_{i}]-k\right)  -\xi_{p}\left(  [Nx_{i}%
]-b_{i}\right)  ]\left(  1+o\left(  1\right)  \right) \nonumber\\
&  =\prod_{i=1,2,3}\frac{\tilde{\Lambda}_{p}\left(  \frac{x_{i}-x_{0}\left(
\mathbf{x}\right)  }{\left|  \left|  x_{i}-x_{0}\left(  \mathbf{x}\right)
\right|  \right|  },t_{i}\right)  }{\sqrt{2\pi N^{d-1}\left|  \left|
x_{i}-x_{0}\left(  \mathbf{x}\right)  \right|  \right|  ^{d-1}}}%
\sum_{\substack{a_{1}\in\mathbb{Z}^{d}\,:\,\left|  \left|  a_{1}\right|
\right|  \leq N^{\beta} \\a_{2}\in\mathbb{Z}^{d}\,:\,\left|  \left|
a_{2}\right|  \right|  \leq N^{\beta} \\a_{3}\in\mathbb{Z}^{d}\,:\,\left|
\left|  a_{3}\right|  \right|  \leq N^{\beta}}}g_{p}^{\eta,K}\left(
0;a_{1},a_{2},a_{3}\right)  \times\nonumber\\
&  \times\exp[\xi_{p}\left(  [Nx_{i}]-k\right)  -\xi_{p}\left(  [Nx_{i}%
]-a_{i}-k\right)  ]\left(  1+o\left(  1\right)  \right)  .\nonumber
\end{align}
But
\begin{align}
\xi_{p}\left(  [Nx_{i}]-k\right)   &  =\xi_{p}\left(  [Nx_{i}]-x_{0}%
([N\mathbf{x}])\right)  -\left(  \nabla\xi_{p}\left(  [Nx_{i}]-x_{0}%
([N\mathbf{x}])\right)  ,k-x_{0}([N\mathbf{x}])\right)  +\\
&  +\frac{1}{2}\left(  k-x_{0}([N\mathbf{x}]),H_{\xi}\left(  [Nx_{i}%
]-x_{0}([N\mathbf{x}]);p\right)  \left(  k-x_{0}([N\mathbf{x}])\right)
\right)  +O\left(  \frac{1}{\sqrt{N}}\right) \nonumber\\
\xi_{p}\left(  [Nx_{i}]-a_{i}-k\right)   &  =\xi_{p}\left(  [Nx_{i}%
]-x_{0}([N\mathbf{x}])\right)  -\left(  \nabla\xi_{p}\left(  [Nx_{i}%
]-x_{0}([N\mathbf{x}])\right)  ,a_{i}+k-x_{0}([N\mathbf{x}])\right)
+\nonumber\\
&  +\frac{1}{2}\left(  a_{i}+k-x_{0}([N\mathbf{x}]),H_{\xi}\left(
[Nx_{i}]-x_{0}([N\mathbf{x}]);p\right)  \left(  a_{i}+k-x_{0}([N\mathbf{x}%
])\right)  \right)  +O\left(  \frac{1}{\sqrt{N}}\right) \nonumber
\end{align}
Hence, since $H_{\xi}\left(  \cdot;p\right)  $ is a homogeneous function of
order $-1$ in $\mathbb{R}^{d}\backslash\{0\},$ (\ref{stimPFb}) is equal to
\begin{align}
&  \prod_{i=1,2,3}\frac{\tilde{\Lambda}_{p}\left(  \frac{x_{i}-x_{0}\left(
\mathbf{x}\right)  }{\left|  \left|  x_{i}-x_{0}\left(  \mathbf{x}\right)
\right|  \right|  },t_{i}\right)  }{\sqrt{2\pi N^{d-1}\left|  \left|
x_{i}-x_{0}\left(  \mathbf{x}\right)  \right|  \right|  ^{d-1}}}%
\sum_{\substack{a_{1}\in\mathbb{Z}^{d}\,:\,\left|  \left|  a_{1}\right|
\right|  \leq N^{\beta} \\a_{2}\in\mathbb{Z}^{d}\,:\,\left|  \left|
a_{2}\right|  \right|  \leq N^{\beta} \\a_{3}\in\mathbb{Z}^{d}\,:\,\left|
\left|  a_{3}\right|  \right|  \leq N^{\beta}}}g_{p}^{\eta,K}\left(
0;a_{1},a_{2},a_{3}\right)  \times\\
&  \times\exp[\sum_{i=1,2,3}\left(  \nabla\xi_{p}\left(  [Nx_{i}%
]-x_{0}([N\mathbf{x}])\right)  ,a_{i}\right)  ]\left(  1+o\left(  1\right)
\right)  .\nonumber
\end{align}
Since there exists a constant $c_{15}$ such that $\left|  \left|  \nabla
\xi_{p}\left(  [Nx_{i}]-x_{0}([N\mathbf{x}])\right)  -t_{i}\right|  \right|
\leq\frac{c_{15}}{N},$ then, by (\ref{stimghK2}),
\begin{align}
&  \sum_{\substack{a_{1}\in\mathbb{Z}^{d}\,:\,\left|  \left|  a_{1}\right|
\right|  \leq N^{\beta} \\a_{2}\in\mathbb{Z}^{d}\,:\,\left|  \left|
a_{2}\right|  \right|  \leq N^{\beta} \\a_{3}\in\mathbb{Z}^{d}\,:\,\left|
\left|  a_{3}\right|  \right|  \leq N^{\beta}}}g_{p}^{\eta,K}\left(
0;a_{1},a_{2},a_{3}\right)  e^{\sum_{i=1,2,3}\left(  \nabla\xi_{p}\left(
[Nx_{i}]-x_{0}([N\mathbf{x}])\right)  ,a_{i}\right)  }\\
&  =\sum_{a_{1},a_{2},a_{3}\in\mathbb{Z}^{d}}g_{p}^{\eta,K}\left(
0;a_{1},a_{2},a_{3}\right)  e^{\sum_{i=1,2,3}\left(  t_{i},a_{i}\right)
}\left(  1+o\left(  1\right)  \right) \nonumber
\end{align}
and
\begin{equation}
\Theta_{p}\left(  \mathbf{x}\right)  :=\prod_{i=1,2,3}\frac{\tilde{\Lambda
}_{p}\left(  \frac{x_{i}-x_{0}\left(  \mathbf{x}\right)  }{\left|  \left|
x_{i}-x_{0}\left(  \mathbf{x}\right)  \right|  \right|  },t_{i}\left(
\mathbf{x}\right)  \right)  }{\sqrt{\left|  \left|  x_{i}-x_{0}\left(
\mathbf{x}\right)  \right|  \right|  ^{d-1}}}\sum_{a_{1},a_{2},a_{3}%
\in\mathbb{Z}^{d}}g_{p}^{\eta,K}\left(  0;a_{1},a_{2},a_{3}\right)
e^{\sum_{i=1,2,3}\left(  t_{i}\left(  \mathbf{x}\right)  ,a_{i}\right)  }%
\end{equation}
is an analytic function on $X_{3}^{\prime}.$

Furthermore,
\begin{align}
\varphi_{p,[N\mathbf{x}]}\left(  k\right)   &  =\varphi_{p,[N\mathbf{x}%
]}\left(  x_{0}\left(  [N\mathbf{x}]\right)  \right)  +\\
&  +\frac{1}{2N}\left(  k-x_{0}\left(  [N\mathbf{x}]\right)  ,H_{\varphi
}\left(  \frac{x_{0}\left(  [N\mathbf{x}]\right)  }{N},\frac{[N\mathbf{x}]}%
{N};p\right)  \left(  k-x_{0}\left(  [N\mathbf{x}]\right)  \right)  \right)
+O\left(  \frac{1}{\sqrt{N}}\right)  .\nonumber
\end{align}
Hence, by (\ref{stimHfi}), setting $k=[Nx_{0}\left(  \mathbf{x}\right)
+\sqrt{N}y],$ we obtain
\begin{equation}
\varphi_{p,[N\mathbf{x}]}\left(  [Nx_{0}\left(  \mathbf{x}\right)  +\sqrt
{N}y]\right)  =\varphi_{p,[N\mathbf{x}]}\left(  x_{0}\left(  [N\mathbf{x}%
]\right)  \right)  +\frac{1}{2}\left(  y,H_{\varphi}\left(  x_{0}\left(
\mathbf{x}\right)  ,\mathbf{x};p\right)  y\right)  +O\left(  \frac{1}{\sqrt
{N}}\right)  .
\end{equation}
\end{proof}

By (\ref{defA}), for any\ $\alpha\in\left(  \frac{1}{2},1\right)
,\,\mathbf{x\in}\left(  \mathbb{R}^{d}\right)  ^{3},$ we have
\begin{align}
E\left(  [N\mathbf{x}]\right)   &  =A_{\alpha,N}\left(  \mathbf{x}\right)
\bigvee A_{\alpha,N}^{\ast}\left(  \mathbf{x}\right)  ,\label{decE}\\
A_{\alpha,N}^{\ast}\left(  \mathbf{x}\right)   &  :=\bigcup_{k\in
\mathbb{Z}^{d}\,:\,\left|  \left|  k-x_{0}\left(  [N\mathbf{x}]\right)
\right|  \right|  \leq N^{\alpha}}F\left(  k;[N\mathbf{x}]\right)  ,
\end{align}
and by (\ref{decF}),
\begin{equation}
A_{\alpha,N}^{\ast}\left(  \mathbf{x}\right)  =\bigcup_{k\in\mathbb{Z}%
^{d}\,:\,\left|  \left|  k-x_{0}\left(  [N\mathbf{x}]\right)  \right|
\right|  \leq N^{\alpha}}\left\{  \bigvee_{\mathbf{b}\in\left(  \mathbb{Z}%
^{d}\right)  ^{3}}T\left(  \mathbf{b};k,[N\mathbf{x}]\right)  \bigvee T^{\ast
}\left(  k;[N\mathbf{x}]\right)  \right\}  .
\end{equation}
Thus, because of Proposition \ref{p1}, we are left with the estimate of the
events
\begin{align}
A_{\alpha,N}^{\ast\ast}\left(  \mathbf{x}\right)   &  :=\bigcup_{k\in
\mathbb{Z}^{d}\,:\,\left|  \left|  k-x_{0}\left(  [N\mathbf{x}]\right)
\right|  \right|  \leq N^{\alpha}}\bigvee_{\mathbf{b}\in\left(  \mathbb{Z}%
^{d}\right)  ^{3}}T\left(  \mathbf{b};k,[N\mathbf{x}]\right)  ,\\
T_{\alpha,N}^{\ast}\left(  \mathbf{x}\right)   &  :=\bigcup_{k\in
\mathbb{Z}^{d}\,:\,\left|  \left|  k-x_{0}\left(  [N\mathbf{x}]\right)
\right|  \right|  \leq N^{\alpha}}T^{\ast}\left(  k;[N\mathbf{x}]\right)
\label{defT*aN}%
\end{align}
but, as we have already remarked, given $k_{1},k_{2}\in\mathbb{Z}^{d}$ and a
$\mathbf{b}\in\left(  \mathbb{Z}^{d}\right)  ^{3}$ admissible for both $k_{1}$
and $k_{2},$ in general $T\left(  \mathbf{b};k_{1},[N\mathbf{x}]\right)  \cap
T\left(  \mathbf{b};k_{2},[N\mathbf{x}]\right)  \neq\emptyset.$ Hence we
cannot use simply the asymptotic estimate (\ref{lbPF}) and sum directly over $k.$

Let us define, for any $\mathbf{b\in}\left(  \mathbb{Z}^{d}\right)  ^{3},$ the
event $T\left(  \mathbf{b};\mathbf{[}N\mathbf{x]}\right)  :=\bigcup
_{k\in\mathbb{Z}^{d}}T\left(  \mathbf{b};k,[N\mathbf{x}]\right)  $ and notice
that, if $\mathbf{b}_{1}\neq\mathbf{b}_{2},$ then $T\left(  \mathbf{b}%
_{1};\mathbf{[}N\mathbf{x]}\right)  $ and $T\left(  \mathbf{b}_{2}%
;\mathbf{[}N\mathbf{x]}\right)  $ are disjoint.

We also remark that the probability of $T\left(  \mathbf{b};\mathbf{[}%
N\mathbf{x]}\right)  $ depends only on the vectors\linebreak \ $b_{i}%
-b_{j},\,i\neq j=1,2,3$ and therefore is translation invariant.

We are now ready to complete the proof of Theorem \ref{main}.

\begin{proof}
[Proof of Theorem \ref{main}]By the definition of $T\left(  \mathbf{b}%
;\mathbf{[}N\mathbf{x]}\right)  $ we have
\begin{equation}
E\left(  [N\mathbf{x}]\right)  =\bigvee_{b_{1},b_{2},b_{3}\in\mathbb{Z}^{d}%
}T\left(  b_{1},b_{2},b_{3};\mathbf{[}N\mathbf{x]}\right)  \bigvee T^{\ast
}\left(  \mathbf{[}N\mathbf{x]}\right)  ,
\end{equation}
where $T^{\ast}\left(  [N\mathbf{x}]\right)  :=\bigcup_{k\in\mathbb{Z}^{d}%
}T^{\ast}\left(  k;[N\mathbf{x}]\right)  .$ Hence
\begin{equation}
\mathbb{P}_{p}[E\left(  \left[  N\mathbf{x}\right]  \right)  ]=\sum
_{b_{1},b_{2},b_{3}\in\mathbb{Z}^{d}}\mathbb{P}_{p}[T\left(  b_{1},b_{2}%
,b_{3};[N\mathbf{x]}\right)  ]+\mathbb{P}_{p}[T^{\ast}\left(  [N\mathbf{x}%
]\right)  ].
\end{equation}
Proceeding as in the proof of the previous proposition, for any $\mathbf{b}%
\in\left(  \mathbb{Z}^{d}\right)  ^{3},$ we define the function
\begin{equation}
g_{p}^{\eta,K}\left(  \mathbf{b}\right)  =g_{p}^{\eta,K}\left(  b_{1}%
,b_{2},b_{3}\right)  ,
\end{equation}
which is the probability of the event $G^{\eta,K}\left(  \mathbf{b}\right)
:=\bigcup_{k\in\mathcal{T}\left(  \mathbf{b}\right)  }G^{\eta,K}\left(
k;\mathbf{b}\right)  .$ Then
\begin{equation}
\mathbb{P}_{p}[T\left(  \mathbf{b,[}N\mathbf{x]}\right)  ]:=g_{p}^{\eta
,K}\left(  \mathbf{b}\right)  \prod_{i=1,2,3}\tilde{h}_{t_{i}}^{\eta,K}\left(
[Nx_{i}]-b_{i}\right)  .
\end{equation}
By the translation invariance of $g_{p}^{\eta,K}\left(  \mathbf{b}\right)  ,$
setting $b_{1}=b,\,b_{2}=b+a_{1},\,b_{3}=b+a_{2},$ we obtain
\begin{align}
&  \sum_{b_{1},b_{2},b_{3}\in\mathbb{Z}^{d}}\mathbb{P}_{p}[T\left(
b_{1},b_{2},b_{3};[N\mathbf{x]}\right)  ]=\sum_{b_{1},b_{2},b_{3}\in
\mathbb{Z}^{d}}g_{p}^{\eta,K}\left(  b_{1},b_{2},b_{3}\right)  \prod
_{i=1,2,3}\tilde{h}_{t_{i}}^{\eta,K}\left(  [Nx_{i}]-b_{i}\right) \\
&  =e^{-\varphi_{p,[N\mathbf{x}]}\left(  x_{0}\left(  [N\mathbf{x}]\right)
\right)  }\sum_{b\in\mathbb{Z}^{d}}\tilde{h}_{t_{1}}^{\eta,K}\left(
[Nx_{1}]-b\right)  e^{\xi_{p}\left(  [Nx_{1}]-x_{0}\left(  [N\mathbf{x}%
]\right)  \right)  }\times\nonumber\\
&  \times\sum_{a_{1},a_{2}\in\mathbb{Z}^{d}}g_{p}^{\eta,K}\left(
0,a_{1},a_{2}\right)  \prod_{i=2,3}e^{\xi_{p}\left(  [Nx_{i}]-x_{0}\left(
[N\mathbf{x}]\right)  \right)  }\tilde{h}_{t_{i}}^{\eta,K}\left(
[Nx_{i}]-b-a_{i-1}\right)  .\nonumber
\end{align}
Since,
\begin{equation}
\mathcal{T}\left(  \mathbf{b}\right)  =\mathcal{T}\left(  b,b+a_{1}%
,b+a_{2}\right)  =b+\mathcal{T}\left(  0\mathbf{,}a_{1},a_{3}\right)  ,
\end{equation}
by (\ref{stimghK}),
\begin{gather}
e^{\varphi_{p,[N\mathbf{x}]}\left(  x_{0}\left(  [N\mathbf{x}]\right)
\right)  }\sum_{b\in\mathbb{Z}^{d}}\tilde{h}_{t_{1}}^{\eta,K}\left(
[Nx_{1}]-b\right)  \sum_{a_{1},a_{2}\in\mathbb{Z}^{d}}g_{p}^{\eta,K}\left(
0,a_{1},a_{2}\right)  \prod_{i=2,3}\tilde{h}_{t_{i}}^{\eta,K}\left(
[Nx_{i}]-b-a_{i-1}\right) \\
\leq\sum_{b\in\mathbb{Z}^{d}}\sum_{a_{1},a_{2}\in\mathbb{Z}^{d}}\sum
_{k\in\mathcal{T}\left(  0,a_{1},a_{2};[N\mathbf{x}]\right)  \cap
\mathbb{Z}^{d}}\exp\left\{  -c_{12}\left[  \left|  \left|  k\right|  \right|
+\left|  \left|  k-a_{1}\right|  \right|  +\left|  \left|  k-a_{2}\right|
\right|  \right]  +\right. \nonumber\\
-\xi_{p}\left(  k\right)  -\xi_{p}\left(  k-a_{1}\right)  -\xi_{p}\left(
k-a_{2}\right)  -\xi_{p}\left(  [Nx_{1}]-b\right)  +\xi_{p}\left(
[Nx_{1}]-x_{0}\left(  [N\mathbf{x}]\right)  \right) \nonumber\\
\left.  -\xi_{p}\left(  [Nx_{2}]-b-a_{1}\right)  +\xi_{p}\left(
[Nx_{2}]-x_{0}\left(  [N\mathbf{x}]\right)  \right)  -\xi_{p}\left(
[Nx_{3}]-b-a_{2}\right)  +\xi_{p}\left(  [Nx_{3}]-x_{0}\left(  [N\mathbf{x}%
]\right)  \right)  \right\}  .\nonumber
\end{gather}

We recall that the probability that points in $\mathcal{T}\left(
\mathbf{b}\right)  $\ disjointly connected to $[Nx_{1}],[Nx_{2}],[Nx_{3}],$
lie outside of a neighborhood of $x_{0}\left(  [N\mathbf{x}]\right)  $ of
radius $N^{\alpha},$ with $\alpha>\frac{1}{2},$ is smaller than the r.h.s. of
(\ref{3BK}). Hence, we can restrict ourselves to consider only those
configurations of points $b_{1},b_{2},b_{3},$ such that the associated set
$\mathcal{T}\left(  \mathbf{b}\right)  $\ has non-empty intersection with
$N^{\alpha}\mathbf{U}^{p}\left(  x_{0}\left(  [N\mathbf{x}]\right)  \right)
.$ Making use of the shorthand notation $\overset{\prime}{\sum_{k}}$ for
$\sum_{k\in\mathcal{T}\left(  0\mathbf{,}a_{1},a_{3}\right)  \cap N^{\alpha
}\mathbf{U}^{p}\left(  x_{0}\left(  [N\mathbf{x}]\right)  -b\right)
\cap\mathbb{Z}^{d}},$ by the convexity of $\xi_{p}$ and Lemma \ref{lbqf}, we
obtain
\begin{gather}
\sum_{b\in\mathbb{Z}^{d}}\sum_{a_{1},a_{2}\in\mathbb{Z}^{d}}\sum_{k}^{\prime
}\exp\left\{  -c_{12}\left[  \left|  \left|  k\right|  \right|  +\left|
\left|  k-a_{1}\right|  \right|  +\left|  \left|  k-a_{2}\right|  \right|
\right]  +\right. \label{stimthm}\\
-\xi_{p}\left(  k\right)  -\xi_{p}\left(  k-a_{1}\right)  -\xi_{p}\left(
k-a_{2}\right)  -\xi_{p}\left(  [Nx_{1}]-b\right)  +\xi_{p}\left(
[Nx_{1}]-x_{0}\left(  [N\mathbf{x}]\right)  \right) \nonumber\\
\left.  -\xi_{p}\left(  [Nx_{2}]-b-a_{1}\right)  +\xi_{p}\left(
[Nx_{2}]-x_{0}\left(  [N\mathbf{x}]\right)  \right)  -\xi_{p}\left(
[Nx_{3}]-b-a_{2}\right)  +\xi_{p}\left(  [Nx_{3}]-x_{0}\left(  [N\mathbf{x}%
]\right)  \right)  \right\} \nonumber\\
\leq\sum_{b\in\mathbb{Z}^{d}}\sum_{a_{1},a_{2}\in\mathbb{Z}^{d}}\sum
_{k}^{\prime}\exp\left\{  -c_{12}\left[  \left|  \left|  k\right|  \right|
+\left|  \left|  k-a_{1}\right|  \right|  +\left|  \left|  k-a_{2}\right|
\right|  \right]  +\right. \nonumber\\
\left.  -[\varphi_{p,[N\mathbf{x}]}\left(  b+k\right)  -\varphi
_{p,[N\mathbf{x}]}\left(  x_{0}\left(  [N\mathbf{x}]\right)  \right)
]\right\} \nonumber\\
\leq\sum_{b\in\mathbb{Z}^{d}}\sum_{a_{1},a_{2}\in\mathbb{Z}^{d}}\sum
_{k}^{\prime}\exp\left\{  -c_{12}\left[  \left|  \left|  k\right|  \right|
+\left|  \left|  k-a_{1}\right|  \right|  +\left|  \left|  k-a_{2}\right|
\right|  \right]  -\frac{c_{2}}{N}\left|  \left|  x_{0}\left(  [N\mathbf{x}%
]\right)  -\left(  b+k\right)  \right|  \right|  ^{2}\right\}  .\nonumber
\end{gather}
Thus, for $\left|  \left|  x_{0}\left(  [N\mathbf{x}]\right)  -b\right|
\right|  >N^{\alpha},$ denoting by $y=y\left(  b,a_{1},a_{2}\right)
\in\mathbb{R}^{d} $ the minimizing point of the convex function
\begin{equation}
w\left(  z\right)  :=\frac{c_{2}}{N}\left|  \left|  x_{0}\left(
[N\mathbf{x}]\right)  -\left(  b+z\right)  \right|  \right|  ^{2}%
+c_{12}\left[  \left|  \left|  z\right|  \right|  +\left|  \left|
z-a_{1}\right|  \right|  +\left|  \left|  z-a_{2}\right|  \right|  \right]  ,
\end{equation}
if $\left|  \left|  y\right|  \right|  \geq\frac{N^{\alpha}}{2},$ then
(\ref{stimthm}) is smaller than $e^{-c_{16}\frac{N^{\alpha}}{2}}.$ On the
other hand, if $\left|  \left|  y\right|  \right|  <\frac{N^{\alpha}}{2},$
then (\ref{stimthm}) is smaller than $e^{-c_{17}\frac{N^{2\alpha-1}}{4}}.$
Therefore, setting $\alpha^{\prime}:=\alpha\wedge\left(  2\alpha-1\right)  ,$
for sufficiently large value of $N,$ we get
\begin{align}
&  e^{\varphi_{p,[N\mathbf{x}]}\left(  x_{0}\left(  [N\mathbf{x}]\right)
\right)  }\sum_{b\in\mathbb{Z}^{d}\,:\,\left|  \left|  b-x_{0}\left(  \left[
N\mathbf{x}\right]  \right)  \right|  \right|  >N^{\alpha}}\tilde{h}_{t_{1}%
}^{\eta,K}\left(  [Nx_{1}]-b\right)  \sum_{a_{1},a_{2}\in\mathbb{Z}^{d}}%
g_{p}^{\eta,K}\left(  0,a_{1},a_{2}\right)  \times\label{pb-x0>Na}\\
&  \times\prod_{i=2,3}\tilde{h}_{t_{i}}^{\eta,K}\left(  [Nx_{i}]-b-a_{i-1}%
\right)  \leq e^{-c_{18}N^{\alpha^{\prime}}}.\nonumber
\end{align}
Moreover, for any $\beta\in\left(  0,\frac{1}{2}\right)  ,$
\begin{gather}
e^{\varphi_{p,[N\mathbf{x}]}\left(  x_{0}\left(  [N\mathbf{x}]\right)
\right)  }\sum_{b\in\mathbb{Z}^{d}\,:\,\left|  \left|  b-x_{0}\left(  \left[
N\mathbf{x}\right]  \right)  \right|  \right|  \leq N^{\alpha}}\tilde
{h}_{t_{1}}^{\eta,K}\left(  [Nx_{1}]-b\right)  \times\label{pa12>Nb}\\
\times\sum_{a_{1}\in\mathbb{Z}^{d}\,:\,\left|  \left|  a_{1}\right|  \right|
>N^{\beta}}\sum_{a_{2}\in\mathbb{Z}^{d}}g_{p}^{\eta,K}\left(  0,a_{1}%
,a_{2}\right)  \prod_{i=1,2}\tilde{h}_{t_{i+1}}^{\eta,K}\left(  [Nx_{i+1}%
]-b-a_{i}\right) \nonumber\\
\leq\sum_{b\in\mathbb{Z}^{d}\,:\,\left|  \left|  b-x_{0}\left(  \left[
N\mathbf{x}\right]  \right)  \right|  \right|  \leq N^{\alpha}}\sum_{a_{1}%
\in\mathbb{Z}^{d}\,:\,\left|  \left|  a_{1}\right|  \right|  >N^{\beta}}%
\sum_{a_{2}\in\mathbb{Z}^{d}}\times\nonumber\\
\times\sum_{k}^{\prime}e^{-\frac{c_{2}}{N}\left|  \left|  x_{0}\left(
[N\mathbf{x}]\right)  -\left(  b+k\right)  \right|  \right|  ^{2}%
-c_{12}\left[  \left|  \left|  k\right|  \right|  +\left|  \left|
k-a_{1}\right|  \right|  +\left|  \left|  k-a_{2}\right|  \right|  \right]
}\nonumber\\
\leq c_{19}N^{2\alpha d}\sum_{a_{1}\in\mathbb{Z}^{d}\,:\,\left|  \left|
a_{1}\right|  \right|  >N^{\beta}}e^{-c_{12}\left|  \left|  a_{1}\right|
\right|  }\leq e^{-c_{20}N^{\beta}}.\nonumber
\end{gather}
A similar inequality holds with $a_{1}$ and $a_{2}$ exchanged.

Notice that (\ref{stim1}), (\ref{pa12>Nb}) and (\ref{pb-x0>Na}) imply
\begin{equation}
\mathbb{P}_{p}\left[  F\left(  k_{1};[N\mathbf{x}]\right)  \cap F\left(
k_{2};[N\mathbf{x}]\right)  \cap\{\left|  \left|  k_{1}-k_{2}\right|  \right|
>N^{\beta}\}\right]  \leq e^{-\varphi_{p,[N\mathbf{x}]}\left(  x_{0}\left(
[N\mathbf{x}]\right)  \right)  -c_{21}N^{\beta\wedge\alpha^{\prime}}}%
\end{equation}
and so what stated in Remark \ref{rem1}.

Then we are left with the estimate of
\begin{align}
&  \sum_{b\in\mathbb{Z}^{d}\,:\,\left|  \left|  b-x_{0}\left(  \left[
N\mathbf{x}\right]  \right)  \right|  \right|  \leq N^{\alpha}}e^{\xi
_{p}\left(  [Nx_{1}]-x_{0}\left(  \left[  N\mathbf{x}\right]  \right)
\right)  }\tilde{h}_{t_{1}}^{\eta,K}\left(  [Nx_{1}]-b\right)  \times
\label{stimfin}\\
&  \times\sum_{\substack{a_{1}\in\mathbb{Z}^{d}\,:\,\left|  \left|
a_{1}\right|  \right|  \leq N^{\beta} \\a_{2}\in\mathbb{Z}^{d}\,:\,\left|
\left|  a_{2}\right|  \right|  \leq N^{\beta}}}g_{p}^{\eta,K}\left(
0,a_{1},a_{2}\right)  \prod_{i=1,2}e^{\xi_{p}\left(  [Nx_{i+1}]-x_{0}\left(
\left[  N\mathbf{x}\right]  \right)  \right)  }\tilde{h}_{t_{i+1}}^{\eta
,K}\left(  [Nx_{i+1}]-b-a_{i}\right)  .\nonumber
\end{align}
Now we choose $\alpha=\frac{1}{2}+\varepsilon$ and $\beta<\frac{1}%
{2}-\varepsilon$ with $\varepsilon\in\left(  0,\frac{1}{6}\right)  $. For
$\left|  \left|  b-x_{0}\left(  \left[  N\mathbf{x}\right]  \right)  \right|
\right|  \leq N^{\alpha}$ and $\left|  \left|  a_{1}\right|  \right|  ,\left|
\left|  a_{2}\right|  \right|  \leq N^{\beta}$ we get
\begin{gather}
\xi_{p}\left(  [Nx_{1}]-b\right)  -\xi_{p}\left(  [Nx_{1}]-x_{0}\left(
\left[  N\mathbf{x}\right]  \right)  \right)  =-\left(  \nabla\xi_{p}\left(
[Nx_{1}]-x_{0}\left(  \left[  N\mathbf{x}\right]  \right)  \right)
,b-x_{0}\left(  \left[  N\mathbf{x}\right]  \right)  \right)  +\label{stimxib}%
\\
+\frac{1}{2}\left(  b-x_{0}\left(  \left[  N\mathbf{x}\right]  \right)
,H_{\xi}\left(  [Nx_{1}]-x_{0}\left(  \left[  N\mathbf{x}\right]  \right)
;p\right)  \left(  b-x_{0}\left(  \left[  N\mathbf{x}\right]  \right)
\right)  \right)  +O\left(  N^{-\frac{1}{2}+3\varepsilon}\right) \nonumber\\
\xi\left(  \lbrack Nx_{i+1}]-b-a_{i}\right)  -\xi_{p}\left(  [Nx_{i+1}%
]-x_{0}\left(  \left[  N\mathbf{x}\right]  \right)  \right)  =-\left(
\nabla\xi_{p}\left(  [Nx_{i+1}]-x_{0}\left(  \left[  N\mathbf{x}\right]
\right)  \right)  ,b+a_{i}-x_{0}\left(  \left[  N\mathbf{x}\right]  \right)
\right)  +\nonumber\\
+\frac{1}{2}\left(  b+a_{i}-x_{0}\left(  \left[  N\mathbf{x}\right]  \right)
,H_{\xi}\left(  [Nx_{i+1}]-x_{0}\left(  \left[  N\mathbf{x}\right]  \right)
;p\right)  \left(  b+a_{i}-x_{0}\left(  \left[  N\mathbf{x}\right]  \right)
\right)  \right)  +O\left(  N^{-\frac{1}{2}+3\varepsilon}\right) \nonumber\\
=-\left(  \nabla\xi_{p}\left(  [Nx_{i+1}]-x_{0}\left(  \left[  N\mathbf{x}%
\right]  \right)  \right)  ,b+a_{i}-x_{0}\left(  \left[  N\mathbf{x}\right]
\right)  \right)  +\nonumber\\
+\frac{1}{2}\left(  b-x_{0}\left(  \left[  N\mathbf{x}\right]  \right)
,H_{\xi}\left(  [Nx_{i+1}]-x_{0}\left(  \left[  N\mathbf{x}\right]  \right)
;p\right)  \left(  b-x_{0}\left(  \left[  N\mathbf{x}\right]  \right)
\right)  \right)  +\nonumber\\
+O\left(  N^{-\frac{1}{2}+\varepsilon+\beta\vee(2\varepsilon)}\right)  \quad
i=1,2.\nonumber
\end{gather}
Moreover, since $\sum_{i=1,2,3}\nabla\xi_{p}\left(  [Nx_{i}]-x_{0}\left(
\left[  N\mathbf{x}\right]  \right)  \right)  =0,$ for any $x\in\mathbb{R}%
^{d},$
\begin{equation}
\sum_{i=1,2,3}\left(  \nabla\xi_{p}\left(  [Nx_{i}]-x_{0}\left(  \left[
N\mathbf{x}\right]  \right)  \right)  ,x\right)  =0.\label{mincond}%
\end{equation}
Then,
\begin{align}
&  \xi_{p}\left(  [Nx_{1}]-b\right)  +\sum_{i=1,2}\xi\left(  \lbrack
Nx_{i+1}]-b-a_{i}\right)  -\varphi_{p,[N\mathbf{x}]}\left(  x_{0}\left(
\left[  N\mathbf{x}\right]  \right)  \right) \\
&  =-\sum_{i=1,2}\left(  \nabla\xi_{p}\left(  [Nx_{i+1}]-x_{0}\left(  \left[
N\mathbf{x}\right]  \right)  \right)  ,a_{i}\right)  +\nonumber\\
&  +\frac{1}{2}\left(  b-x_{0}\left(  \left[  N\mathbf{x}\right]  \right)
,H_{\varphi}\left(  x_{0}\left(  \left[  N\mathbf{x}\right]  \right)  ,\left[
N\mathbf{x}\right]  ;p\right)  \left(  b-x_{0}\left(  \left[  N\mathbf{x}%
\right]  \right)  \right)  \right)  +O\left(  N^{-\frac{1}{2}+\beta\vee\left(
2\varepsilon\right)  +\varepsilon}\right)  .\nonumber
\end{align}
Hence, making use of (\ref{ashtt}), (\ref{stimfin}) becomes
\begin{align}
&  \prod_{i=1,2,3}\frac{\tilde{\Lambda}_{p}\left(  \frac{x_{i}-x_{0}\left(
\mathbf{x}\right)  }{\left|  \left|  x_{i}-x_{0}\left(  \mathbf{x}\right)
\right|  \right|  },t_{i}\right)  }{\sqrt{2\pi N^{d-1}\left|  \left|
x_{i}-x_{0}\left(  \mathbf{x}\right)  \right|  \right|  ^{d-1}}}\sum
_{b\in\mathbb{Z}^{d}\,:\,\left|  \left|  b-x_{0}\left(  \left[  N\mathbf{x}%
\right]  \right)  \right|  \right|  \leq N^{\frac{1}{2}+\varepsilon}}%
e^{-\frac{1}{2}\left(  b-x_{0}\left(  \left[  N\mathbf{x}\right]  \right)
,H_{\varphi}\left(  x_{0}\left(  \left[  N\mathbf{x}\right]  \right)  ,\left[
N\mathbf{x}\right]  ;p\right)  \left(  b-x_{0}\left(  \left[  N\mathbf{x}%
\right]  \right)  \right)  \right)  }\times\label{PE1}\\
&  \times\sum_{\substack{a_{1}\in\mathbb{Z}^{d}\,:\,\left|  \left|
a_{1}\right|  \right|  \leq N^{\beta} \\a_{2}\in\mathbb{Z}^{d}\,:\,\left|
\left|  a_{2}\right|  \right|  \leq N^{\beta}}}g_{p}^{\eta,K}\left(
0,a_{1},a_{2}\right)  e^{\sum_{i=1,2}\left(  \nabla\xi_{p}\left(
[Nx_{i+1}]-x_{0}\left(  \left[  N\mathbf{x}\right]  \right)  \right)
,a_{i}\right)  }\left(  1+o\left(  1\right)  \right)  .\nonumber
\end{align}
Furthermore, by (\ref{stimghK}) and (\ref{mincond}),
\begin{align}
&  \sum_{\substack{a_{1}\in\mathbb{Z}^{d}\,:\,\left|  \left|  a_{1}\right|
\right|  \leq N^{\beta} \\a_{2}\in\mathbb{Z}^{d}\,:\,\left|  \left|
a_{2}\right|  \right|  \leq N^{\beta}}}g_{p}^{\eta,K}\left(  0,a_{1}%
,a_{2}\right)  e^{\sum_{i=1,2}\left(  \nabla\xi_{p}\left(  [Nx_{i+1}%
]-x_{0}\left(  \left[  N\mathbf{x}\right]  \right)  \right)  ,a_{i}\right)
}\label{stimfin1}\\
&  \leq\sum_{\substack{a_{1}\in\mathbb{Z}^{d}\,:\,\left|  \left|
a_{1}\right|  \right|  \leq N^{\beta} \\a_{2}\in\mathbb{Z}^{d}\,:\,\left|
\left|  a_{2}\right|  \right|  \leq N^{\beta}}}\sum_{k}^{\prime}e^{-\xi
_{p}\left(  k\right)  -\sum_{i=1,2}[\xi_{p}\left(  k-a_{i}\right)  -\left(
\nabla\xi_{p}\left(  [Nx_{i+1}]-x_{0}\left(  \left[  N\mathbf{x}\right]
\right)  \right)  ,a_{i}\right)  ]-c_{12}\left[  \left|  \left|  k\right|
\right|  +\sum_{i=1,2}\left|  \left|  k-a_{i}\right|  \right|  \right]
}\nonumber\\
&  \leq\sum_{\substack{a_{1}\in\mathbb{Z}^{d}\,:\,\left|  \left|
a_{1}\right|  \right|  \leq N^{\beta} \\a_{2}\in\mathbb{Z}^{d}\,:\,\left|
\left|  a_{2}\right|  \right|  \leq N^{\beta}}}\sum_{k}^{\prime}e^{-\xi
_{p}\left(  k\right)  -\left(  \nabla\xi_{p}\left(  [Nx_{1}]-x_{0}\left(
\left[  N\mathbf{x}\right]  \right)  \right)  ,k\right)  -\sum_{i=1,2}[\xi
_{p}\left(  k-a_{i}\right)  -\left(  \nabla\xi_{p}\left(  [Nx_{i+1}%
]-x_{0}\left(  \left[  N\mathbf{x}\right]  \right)  \right)  ,a_{i}-k\right)
]}\times\nonumber\\
&  \times e^{-c_{12}\left[  \left|  \left|  k\right|  \right|  +\sum
_{i=1,2}\left|  \left|  k-a_{i}\right|  \right|  \right]  }.\nonumber
\end{align}
But,
\begin{gather}
\xi_{p}\left(  [Nx_{i}]-\left(  b+k\right)  \right)  =\xi_{p}\left(
[Nx_{i}]-x_{0}\left(  \left[  N\mathbf{x}\right]  \right)  \right)  -\left(
\nabla\xi_{p}\left(  [Nx_{i}]-x_{0}\left(  \left[  N\mathbf{x}\right]
\right)  \right)  ,b+k-x_{0}\left(  \left[  N\mathbf{x}\right]  \right)
\right)  +\\
+\frac{1}{2}\left(  b+k-x_{0}\left(  \left[  N\mathbf{x}\right]  \right)
,H_{\xi}\left(  [Nx_{i}]-x_{0}\left(  \left[  N\mathbf{x}\right]  \right)
;p\right)  \left(  b+k-x_{0}\left(  \left[  N\mathbf{x}\right]  \right)
\right)  \right)  +O\left(  N^{-\frac{1}{2}+3\varepsilon}\right) \nonumber
\end{gather}
and by (\ref{stimxib}) it follows that
\begin{align}
\xi_{p}\left(  k\right)   &  \geq\xi_{p}\left(  [Nx_{i}]-\left(  b+k\right)
\right)  -\xi_{p}\left(  [Nx_{i}]-b\right) \\
&  =-\left(  \nabla\xi_{p}\left(  [Nx_{i}]-x_{0}\left(  \left[  N\mathbf{x}%
\right]  \right)  \right)  ,k\right)  +O\left(  N^{-\frac{1}{2}+\beta
+\varepsilon}\right)  .\nonumber
\end{align}
Then, since for any $x,y\in\mathbb{R}^{d},\,\left(  \nabla\xi_{p}\left(
x\right)  ,y\right)  \leq\xi_{p}\left(  y\right)  ,$ from (\ref{stimfin1}) it
follows that there exists a positive constant $c_{22}$ such that
\begin{align}
&  \sum_{\substack{a_{1}\in\mathbb{Z}^{d}\,:\,\left|  \left|  a_{1}\right|
\right|  \leq N^{\beta} \\a_{2}\in\mathbb{Z}^{d}\,:\,\left|  \left|
a_{2}\right|  \right|  \leq N^{\beta}}}g_{p}^{\eta,K}\left(  0,a_{1}%
,a_{2}\right)  e^{\sum_{i=1,2}\left(  \nabla\xi_{p}\left(  [Nx_{i+1}%
]-x_{0}\left(  \left[  N\mathbf{x}\right]  \right)  \right)  ,a_{i}\right)
}\\
&  \leq\sum_{k\in\mathbb{Z}^{d}}\sum_{\substack{a_{1}\in\mathbb{Z}%
^{d}\,:\,\left|  \left|  a_{1}\right|  \right|  \leq N^{\beta} \\a_{2}%
\in\mathbb{Z}^{d}\,:\,\left|  \left|  a_{2}\right|  \right|  \leq N^{\beta}%
}}e^{-c_{22}\left[  \left|  \left|  k\right|  \right|  +\sum_{i=1,2}\left|
\left|  k-a_{i}\right|  \right|  \right]  }.\nonumber
\end{align}
Hence, the r.h.s. of (\ref{stimfin1}) is bounded by a finite constant and
\begin{align}
&  \sum_{\substack{a_{1}\in\mathbb{Z}^{d}\,:\,\left|  \left|  a_{1}\right|
\right|  \leq N^{\beta} \\a_{2}\in\mathbb{Z}^{d}\,:\,\left|  \left|
a_{2}\right|  \right|  \leq N^{\beta}}}g_{p}^{\eta,K}\left(  0,a_{1}%
,a_{2}\right)  e^{\sum_{i=1,2}\left(  \nabla\xi_{p}\left(  [Nx_{i+1}%
]-x_{0}\left(  \left[  N\mathbf{x}\right]  \right)  \right)  ,a_{i}\right)
}\label{stimgfin}\\
&  =\sum_{a_{1},a_{2}\in\mathbb{Z}^{d}}g_{p}^{\eta,K}\left(  0,a_{1}%
,a_{2}\right)  e^{\sum_{i=1,2}\left(  t_{i+1},a_{i}\right)  }\left(
1+o\left(  1\right)  \right)  .\nonumber
\end{align}
Finally, by (\ref{stimPT*}),
\begin{equation}
\mathbb{P}_{p}[T^{\ast}\left(  [N\mathbf{x}]\right)  ]\leq\sum_{i=1,2,3}%
\sum_{k\in\mathbb{Z}^{d}}e^{-\varphi_{p,[N\mathbf{x]}}\left(  k\right)
-c_{10}\left|  \left|  k-[Nx_{i}]\right|  \right|  }.
\end{equation}
By (\ref{defT*aN}), we need only to estimate
\begin{equation}
\mathbb{P}_{p}[T_{\frac{1}{2}+\varepsilon,N}^{\ast}\left(  \mathbf{x}\right)
]\leq\sum_{i=1,2,3}\sum_{k\in\mathbb{Z}^{d}\,:\,\left|  \left|  k-x_{0}\left(
\left[  N\mathbf{x}\right]  \right)  \right|  \right|  \leq N^{\frac{1}%
{2}+\varepsilon}}e^{-\varphi_{p,[N\mathbf{x]}}\left(  k\right)  -c_{10}\left|
\left|  k-[Nx_{i}]\right|  \right|  },
\end{equation}
but, by Lemma \ref{lbqf},
\begin{equation}
\varphi_{p,[N\mathbf{x]}}\left(  k\right)  \geq\varphi_{p,[N\mathbf{x]}%
}\left(  x_{0}\left(  [N\mathbf{x}]\right)  \right)  +\frac{c_{2}}{N}\left|
\left|  k-x_{0}\left(  [N\mathbf{x}]\right)  \right|  \right|  ^{2}.
\end{equation}
Thus, there exists a positive constant $c_{23}$ such that,
\begin{equation}
\mathbb{P}_{p}[T^{\ast}\left(  [N\mathbf{x}]\right)  ]\leq N^{d\left(
\frac{1}{2}+\varepsilon\right)  }e^{-\varphi_{p,[N\mathbf{x]}}\left(
x_{0}\left(  [N\mathbf{x}]\right)  \right)  -c_{23}N}.
\end{equation}

Collecting all the previous estimates, from (\ref{PE1}), (\ref{stimHfi}) and
(\ref{stimgfin}) we obtain
\begin{equation}
\mathbb{P}_{p}[E\left(  [N\mathbf{x}]\right)  ]=\frac{\left(  2\pi\right)
^{\frac{d}{2}}N^{\frac{d}{2}}}{\sqrt{\det H_{\varphi}\left(  x_{0}\left(
\mathbf{x}\right)  ,\mathbf{x};p\right)  }}\frac{\theta_{p}\left(
\mathbf{x}\right)  }{\left(  2\pi N^{d-1}\right)  ^{\frac{3}{2}}}%
e^{-\varphi_{p,[N\mathbf{x]}}\left(  x_{0}\left(  [N\mathbf{x}]\right)
\right)  }\left(  1+o\left(  1\right)  \right)  ,
\end{equation}
with
\begin{equation}
\theta_{p}\left(  \mathbf{x}\right)  :=\prod_{i=1,2,3}\frac{\tilde{\Lambda
}_{p}\left(  \frac{x_{i}-x_{0}\left(  \mathbf{x}\right)  }{\left|  \left|
x_{i}-x_{0}\left(  \mathbf{x}\right)  \right|  \right|  },t_{i}\left(
\mathbf{x}\right)  \right)  }{\sqrt{\left|  \left|  x_{i}-x_{0}\left(
\mathbf{x}\right)  \right|  \right|  ^{d-1}}}\sum_{a_{1},a_{2}\in
\mathbb{Z}^{d}}g_{p}^{\eta,K}\left(  0,a_{1},a_{2}\right)  e^{\sum
_{i=1,2}\left(  t_{i+1}\left(  \mathbf{x}\right)  ,a_{i}\right)  }%
\end{equation}
analytic function on $X_{3}^{\prime}.$

Therefore, by (\ref{lbPF}),
\begin{align}
&  \mathbb{P}_{p}\left[  F\left(  \left[  Nx_{0}\left(  \mathbf{x}\right)
+y\sqrt{N}\right]  ;\left[  N\mathbf{x}\right]  \right)  |E\left(  \left[
N\mathbf{x}\right]  \right)  \right] \\
&  =\frac{\mathbb{P}_{p}\left[  F\left(  \left[  Nx_{0}\left(  \mathbf{x}%
\right)  +y\sqrt{N}\right]  ;\left[  N\mathbf{x}\right]  \right)  \right]
}{\mathbb{P}_{p}\left[  E\left(  \left[  N\mathbf{x}\right]  \right)  \right]
}\nonumber\\
&  =\frac{\Theta_{p}\left(  \mathbf{x}\right)  \sqrt{\det H_{\varphi}\left(
x_{0}\left(  \mathbf{x}\right)  ,\mathbf{x};p\right)  }}{\left(  2\pi\right)
^{\frac{d}{2}}\theta_{p}\left(  \mathbf{x}\right)  N^{\frac{d}{2}}}%
e^{-\frac{1}{2}\left(  y,H_{\varphi}\left(  x_{0}\left(  \mathbf{x}\right)
,\mathbf{x},p\right)  y\right)  }\left(  1+o\left(  1\right)  \right)
,\nonumber
\end{align}
which gives the asymptotic estimate (\ref{PFE}) with
\begin{equation}
\Phi_{p}\left(  \mathbf{x}\right)  :=\frac{\Theta_{p}\left(  \mathbf{x}%
\right)  }{\theta_{p}\left(  \mathbf{x}\right)  }=\frac{\sum_{a_{1}%
,a_{2},a_{3}\in\mathbb{Z}^{d}}g_{p}^{\eta,K}\left(  0;a_{1},a_{2}%
,a_{3}\right)  e^{\sum_{i=1,2,3}\left(  t_{i}\left(  \mathbf{x}\right)
,a_{i}\right)  }}{\sum_{a_{1},a_{2}\in\mathbb{Z}^{d}}g_{p}^{\eta,K}\left(
0,a_{1},a_{2}\right)  e^{\sum_{i=1,2}\left(  t_{i+1}\left(  \mathbf{x}\right)
,a_{i}\right)  }}.
\end{equation}
\end{proof}

\end{document}